\title[The diameter of type~$D$ associahedra and the non-leaving-face property]{The diameter of type~$D$ associahedra \\ and the non-leaving-face property}
\author[C.~Ceballos]{Cesar Ceballos$^{\star}$} 
\address[C.~Ceballos]{Department of Mathematics and Statistics, York University, Toronto}
\email{ceballos@mathstat.yorku.ca}
\urladdr{http://garsia.math.yorku.ca/~ceballos/}
\thanks{$^\star$CC was supported by the government of Canada through an NSERC Banting Postdoctoral Fellowship. He was also supported by a York University research grant.}
\author[V.~Pilaud]{Vincent Pilaud$^{\ddagger}$} 
\address[V.~Pilaud]{CNRS \& LIX, \'Ecole Polytechnique, Palaiseau}
\email{vincent.pilaud@lix.polytechnique.fr}
\urladdr{http://www.lix.polytechnique.fr/~pilaud/}
\thanks{$^\ddagger$VP was partially supported by the spanish MICINN grant MTM2011-22792 and the french ANR grant EGOS (12 JS02 002 01).}
\newtheorem{theorem}{Theorem}
\newtheorem{proposition}[theorem]{Proposition}
\newtheorem{lemma}[theorem]{Lemma}
\newtheorem{definition}[theorem]{Definition}
\theoremstyle{definition}
\newtheorem{remark}[theorem]{Remark}
\newcommand{\R}{\mathbb{R}} 
\newcommand{\configD}{\mathbb D} 
\newcommand{\disk}{D} 
\newcommand{\set}[2]{\left\{ #1 \;\middle|\; #2 \right\}} 
\newcommand{\ssm}{\smallsetminus} 
\newcommand{\eqdef}{\mbox{\,\raisebox{0.2ex}{\scriptsize\ensuremath{\mathrm:}}\ensuremath{=}\,}} 
\newcommand{\diag}[2]{[#1,#2]} 
\newcommand{\diagD}[2]{\ifthenelse{\equal{#2}{L}}{#1^{\textsc l}}{#1^{\textsc r}}} 
\newcommand{\starL}{\diagD{S}{L}} 
\newcommand{\starR}{\diagD{S}{R}} 
\newcommand{\pseudoquadrangle}{\raisebox{-1pt}{\includegraphics{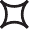}}} 
\newcommand{\Asso}{\mathsf{Asso}} 
\newcommand{\normalize}{\mathrm{N}} 
\DeclareMathOperator{\conv}{conv} 
\newcommand{\fref}[1]{Figure~\ref{#1}} 
\newcommand{\ie}{\textit{i.e.}~} 
\newcommand{\eg}{\textit{e.g.}~} 
\newcommand{\para}[1]{\medskip\noindent\textbf{#1~---}} 
\definecolor{darkblue}{rgb}{0,0,0.7} 
\newcommand{\darkblue}{\color{darkblue}} 
\newcommand{\defn}[1]{\emph{\darkblue #1}} 
\subjclass[2010]{Primary: 05C12, 52B05; Secondary 52B11}
\begin{document}

\begin{abstract}
Generalized associahedra were introduced by S. Fomin and A. Zelevinsky in connection to finite type cluster algebras. Following recent work of L.~Pournin in types~$A$ and~$B$, this paper focuses on geodesic properties of generalized associahedra. We prove that the graph diameter of the $n$-dimensional associahedron of type~$D$ is precisely~$2n-2$ for all~$n$ greater than~$1$. Furthermore, we show that all type~$BCD$ associahedra have the non-leaving-face property, that is, any geodesic connecting two vertices in the graph of the polytope stays in the minimal face containing both. This property was already proven by D.~Sleator, R.~Tarjan and W.~Thurston for associahedra of type $A$. In contrast, we present relevant examples related to the associahedron that do not always satisfy this property.

\smallskip
\noindent
{\sc keywords.}
Associahedron -- graph diameter -- pseudotriangulation -- flip graph.
\end{abstract}

\vspace*{-.8cm}

\maketitle

\vspace*{-.3cm}


\section{Introduction}

The associahedron is a convex polytope whose vertices are in correspondence with triangulations of a convex polygon and whose edges are flips among them. 
Motivated by efficiency of repeated access and information update in binary search trees, D.~Sleator, R.~Tarjan and W.~Thurston~\cite{SleatorTarjanThurston} showed that the diameter of the $n$-dimensional associahedron is at most~$2n-4$ for~$n$ greater than~$9$, and used arguments in hyperbolic geometry to prove that this bound is tight when~$n$ is large enough. They also conjectured that the diameter is~$2n-4$ for all~$n$ greater than~$9$. This conjecture was recently settled using purely combinatorial arguments by L.~Pournin~\cite{Pournin}, who explicitly exhibited two triangulations realizing this maximal distance. 

Associahedra are considered as one of the most important families of examples in polytope theory~\cite{LoeraRambauSantos,Ziegler1}. Besides their combinatorial beauty, they are of great importance in diverse areas in mathematics, computer science and physics~\cite{Stasheff, Stasheff-operads, TamariFestschrift}. One of the most significant appearances of associahedra is in the theory of cluster algebras initiated by S.~Fomin and A.~Zelevinsky~\cite{FominZelevinsky-ClusterAlgebrasI,FominZelevinsky-ClusterAlgebrasII}. They introduced a notion of generalized associahedra which extends the concept of associahedra to any Weyl group~\cite{FominZelevinsky-YSystems}. These essential objects encode the flip graphs of cluster algebras of finite type, and were realized as polytopes for the first time by F.~Chapoton, S.~Fomin and A.~Zelevinsky~\cite{ChapotonFominZelevinsky}. The concept of generalized associahedra was further extended to arbitrary Coxeter groups by N.~Reading in~\cite{Reading-CambrianLattices}. All these Coxeter associahedra, including those corresponding to Weyl groups, were realized as polytopes by C.~Hohlweg, C.~Lange and H.~Thomas~\cite{HohlwegLangeThomas} in connection with the Cambrian fans studied by N.~Reading and D.~Speyer in~\cite{ReadingSpeyer}.
Three interesting cases of generalized associahedra are the infinite families of type~$A$ (classical associahedra), of type~$B/C$ (cyclohedra), and of type~$D$.

In this paper, we show that the diameter of the $n$-dimensional associahedron of type~$D$ is precisely~$2n-2$ for all $n$ greater than~$1$. This is done using a convenient combinatorial model for type~$D$ associahedra in terms of centrally symmetric pseudotriangulations of a regular \mbox{$2n$-gon} with a small hole in the center. The same proof was presented independently by Y.~Lebrun in~\cite{Lebrun} using the combinatorial model in terms of decorated triangulations of the punctured $n$-gon arising from the work of S.~Fomin, M.~Shapiro and D.~Thurston~\cite{FominShapiroThurston}. As in~\cite{Pournin}, our methods are purely combinatorial and we explicitly describe two vertices of the polytope which are at maximal distance. In a recent preprint~\cite{Pournin-diameterTypeB}, L.~Pournin extends his method used in type~$A$ to derive the asymptotic diameter of type~$B$ associahedra: he shows that the diameter of the $n$-dimensional associahedron of type~$B$ is asymptotically~$5n/2$. As the type~$I_2(p)$ associahedron is a $(p+2)$-gon with diameter~${\lfloor p/2 \rfloor + 1}$, L.~Pournin's and our paper thus settle the problem of the graph diameters of generalized associahedra. For completeness, we gather in Table~\ref{table:asymptoticDiameters} the asymptotic diameters for type~$A$, $B/C$ and~$D$ associahedra, and in Table~\ref{table:smallDiameters} the precise diameters of the small rank generalized associahedra. The latter were reported in \cite{SleatorTarjanThurston} for type~$A$, in \cite{Pournin-diameterTypeB} for type~$B$, and were computed using C. Stump's Sage package on subword complexes for exceptional~types.

\begin{table}
    \centerline{
    \begin{tabular}{|l|c|c|c|}
    \hline
    \quad type & $A$ & $B/C$ & $D$ \\
    \quad asymptotic diameter$\quad$ & $\quad 2n-4 \quad $ & $\quad 5n/2 \quad$ & $\quad 2n-2 \quad$ \\
    \hline
    \end{tabular}
    }
    \bigskip
    \caption{Asymptotic diameters of type~$A$, $B/C$ and~$D$ associahedra.}
    \label{table:asymptoticDiameters}
\end{table}

\begin{table}
	\vspace{-.3cm}
    \centerline{
    \begin{tabular}{|p{1.4cm}|p{.1cm}p{.1cm}p{.1cm}p{.1cm}p{.1cm}p{.2cm}p{.2cm}p{.3cm}|p{.1cm}p{.1cm}p{.1cm}p{.1cm}p{.2cm}p{.2cm}p{.2cm}p{.2cm}p{.2cm}p{.2cm}p{.3cm}|c|p{.2cm}p{.2cm}p{.3cm}|p{.2cm}|p{.1cm}p{.3cm}|c|}
    \hline
    type & \multicolumn{8}{c|}{$A$} & \multicolumn{11}{c|}{$B/C$} & $D$ & \multicolumn{3}{c|}{$E$} & $F$ & \multicolumn{2}{c|}{$H$} & $I_2(p)$ \\
    rank & $2$ & $3$ & $4$ & $5$ & $6$ & $7$ & $8$ & $9$ & $2$ & $3$ & $4$ & $5$ & $6$ & $7$ & $8$ & $9$ & $10$ & $11$ & $12$ & $n$ & $6$ & $7$ & $8$ & $4$ & $3$ & $4$ & $2$ \\
    diameter & $2$ & $4$ & $5$ & $7$ & $9$ & $11$ & $12$ & $15$ & $3$ & $5$ & $7$ & $9$ & $11$ & $14$ & $16$ & $18$ & $21$ & $23$ & $25$ & $2n-2$ & $11$ & $14$ & $19$ & $8$ & $6$ & $10$ & $\lfloor p/2 \rfloor + 1$ \\
    \hline
    \end{tabular}
    }
    \bigskip
    \caption{Diameters of small rank associahedra of arbitrary finite types.}
    \label{table:smallDiameters}
\end{table}

In connection to this graph diameter question, we also show that all infinite families of associahedra of finite types have the non-leaving-face property, namely  every geodesic connecting two vertices in the graph of the polytope stays in the minimal face containing both. This is a known result of D.~Sleator, R.~Tarjan and W.~Thurston~\cite[Lemma~3]{SleatorTarjanThurston} for type $A$. Using similar \emph{normalization} ideas, we prove it type-by-type for types~$BCD$. By computer experiment with the computer software Sage~\cite{sage}, we also checked this property in the exceptional types~$E_6$,~$F_4$,~$H_3$ and~$H_4$. The remaining types~$E_7$ and~$E_8$ were still to be checked when N.~Williams announced a type-free proof of the non-leaving-face property for all generalized associahedra~\cite{Williams}. In contrast, we present five remarkable examples related to the associahedron which do not always satisfy this property: the pseudotriangulation polytopes, the multiassociahedra, the graph associahedra, the secondary polytopes, and the flip graphs on all triangulations of a point set.



\section{Pseudotriangulation model for type~$D$ associahedra}
\label{sec:Dmodel}

In this section, we present a combinatorial model for the type~$D_n$ associahedra~$\Asso(D_n)$ in terms of pseudotriangulations of a geometric configuration~$\configD_n$. The vertices of~$\Asso(D_n)$ correspond to centrally symmetric \emph{pseudotriangulations}, and its edges to \emph{flips} between them. In Remark~\ref{rem:typeD}, we compare our geometric interpretation to the classical models of type~$D$ cluster algebras presented by S.~Fomin and A.~Zelevinsky in~\cite[Section~3.5]{FominZelevinsky-YSystems}\cite[Section~12.4]{FominZelevinsky-ClusterAlgebrasII} and by S.~Fomin, M.~Shapiro and D.~Thurston in~\cite{FominShapiroThurston}.

\begin{figure}[b]
	\centerline{\includegraphics[scale=.8]{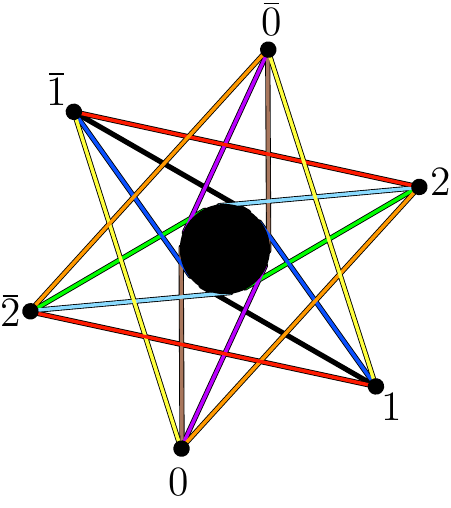} \quad \includegraphics[scale=.85]{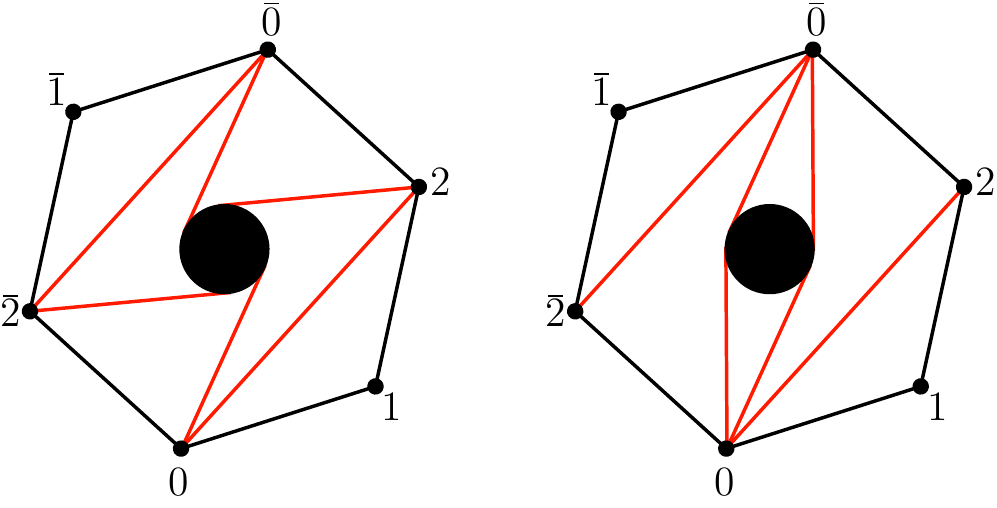}}
	\caption{The configuration~$\configD_3$ has $9$ centrally symmetric pairs of chords (left). A centrally symmetric pseudotriangulation~$T$ of~$\configD_3$ (middle). The centrally symmetric pseudotriangulation of~$\configD_3$ obtained from~$T$ by flipping the chords~$2^{\textsc r}$ and~$\bar 2^{\textsc r}$.}
	\label{fig:configurationDn&Flips}
\end{figure}

We consider a regular convex~$2n$-gon, together with a disk~$\disk$ placed at its center, whose radius is small enough such that~$\disk$ only intersects the long diagonals of the $2n$-gon. We denote by~$\configD_n$ the resulting configuration, see \fref{fig:configurationDn&Flips}. 
The vertices of~$\configD_n$ are labeled by $0,1,\dots, n-1, \overline 0, \overline 1,\dots ,\overline{n-1}$ in counterclockwise direction, such that two vertices $p$ and $\overline p$ are symmetric with respect to the center of the polygon. 
The \defn{chords} of~$\configD_n$ are all the diagonals of the $2n$-gon, except the long ones, plus all the segments tangent to the disk~$\disk$ and with one endpoint among the vertices of the \mbox{$2n$-gon}. Note that each vertex~$p$ is adjacent to two of the latter chords; we denote by~$\diagD{p}{L}$ (resp.~by~$\diagD{p}{R}$) the chord emanating from~$p$ which goes tangent on the left (resp.~right) to the disk~$\disk$, and we call these chords \defn{central}. 
For example, the four central chords that appear in \fref{fig:configurationDn&Flips}\,(middle) are $\diagD{0}{R},\diagD{\bar 0}{R},\diagD{2}{R}$ and $\diagD{\bar 2}{R}$. 
The faces of the type~$D_n$ associahedron can be interpreted geometrically on the configuration~$\configD_n$ as follows:
\begin{enumerate}[(i)]
\item Facets correspond to \defn{centrally symmetric pairs of (internal) chords} of the geometric configuration~$\configD_n$, see \fref{fig:configurationDn&Flips}\,(left).

\item Faces correspond to crossing-free centrally symmetric sets of chords. The face lattice corresponds to the reverse inclusion lattice on crossing-free centrally symmetric sets of chords.

\item Vertices correspond to \defn{centrally symmetric pseudotriangulations} of~$\configD_n$ (\ie inclusion maximal centrally symmetric crossing-free sets of chords of~$\configD_n$). Each pseudotriangulation of~$\configD_n$ contains exactly~$2n$ chords, and partitions $\conv(\configD_n) \ssm \disk$ into \defn{pseudotriangles}  (\ie interiors of simple closed curves with three convex corners related by three concave chains). See \fref{fig:configurationDn&Flips}\,(middle) and (right). We refer to~\cite{RoteSantosStreinu-survey} for a complete survey on pseudotriangulations, including their history, motivations, and applications.

\item \label{item:pseudoflip} Edges correspond to \defn{flips} of centrally symmetric pairs of chords between centrally symmetric pseudotriangulations of~$\configD_n$. A flip in a pseudotriangulation~$T$ replaces an internal chord~$e$ by the unique other internal chord~$f$ such that~$(T \ssm e) \cup f$ is again a pseudotriangulation of~$T$. Indeed, deleting~$e$ in~$T$ merges the two pseudotriangles of~$T$ incident to~$e$ into a pseudoquadrangle~$\pseudoquadrangle$ (\ie the interior of a simple closed curve with four convex corners related by four concave chains), and adding~$f$ splits the pseudoquadrangle~$\pseudoquadrangle$ into two new pseudotriangles. The chords~$e$ and~$f$ are the two unique chords which lie both in the interior of~$\pseudoquadrangle$ and on a geodesic between two opposite corners of~$\pseudoquadrangle$. We refer again to~\cite{RoteSantosStreinu-survey} for more details. 

For example, the two pseudotriangulations of \fref{fig:configurationDn&Flips} are related by a centrally symmetric pair of flips. We have represented different kinds of flips between centrally symmetric pseudotriangulations of the configuration~$\configD_n$ in \fref{fig:typeDflip}. Finally, \fref{fig:typeD3associahedron} shows the flip graph on centrally symmetric pseudotriangulations of~$\configD_3$, and Figures~\ref{fig:typeD4associahedron} and~\ref{fig:42pseudotriangulationsHorizontal} the one~for~$\configD_4$.

\enlargethispage{.3cm}
\begin{figure}[b]
	\centerline{\includegraphics[width=.92\textwidth]{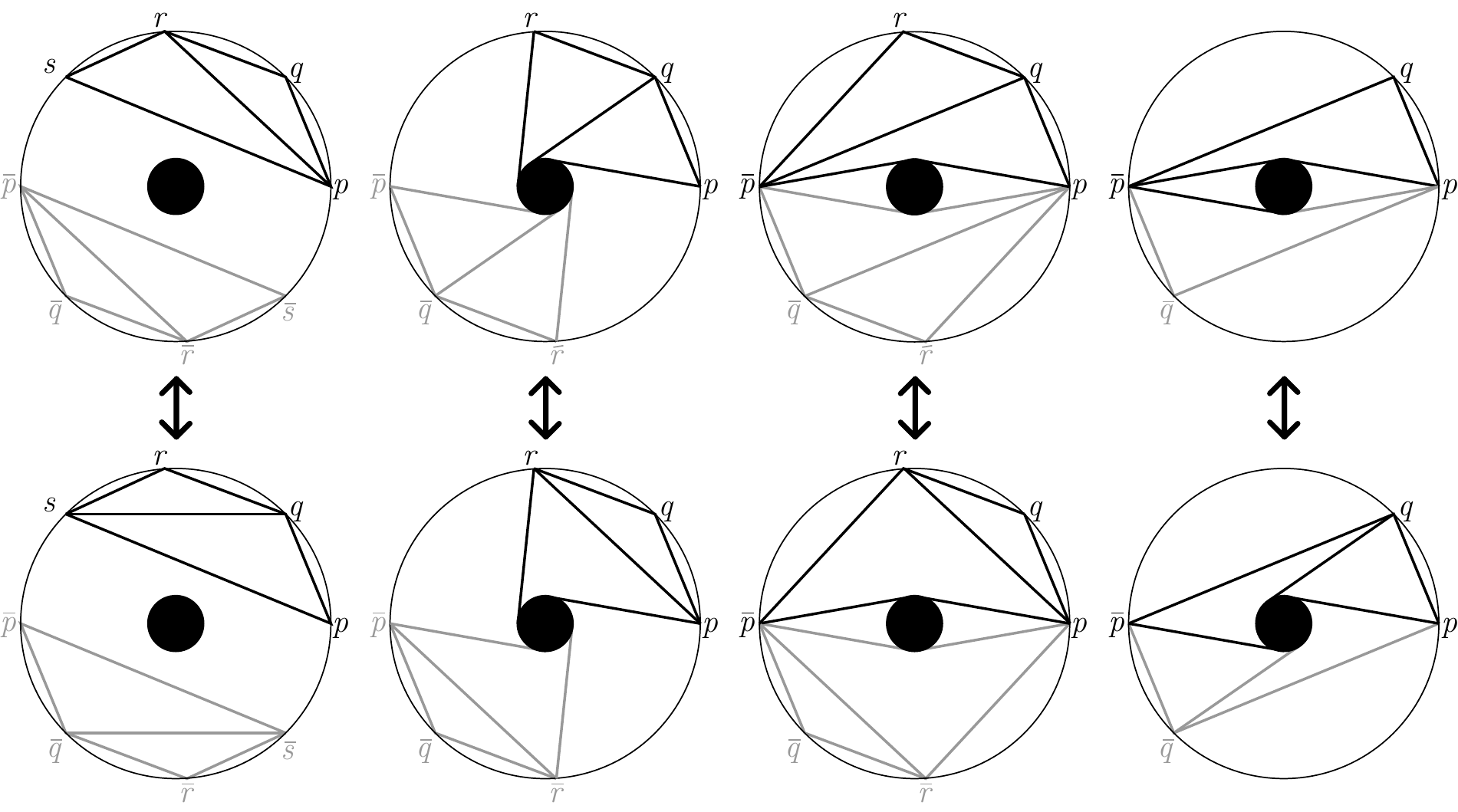}}
	\caption{Different kinds of flips in type~$D$.}
	\label{fig:typeDflip}
\end{figure}

\begin{figure}
	\centerline{\includegraphics[width=.6\textwidth]{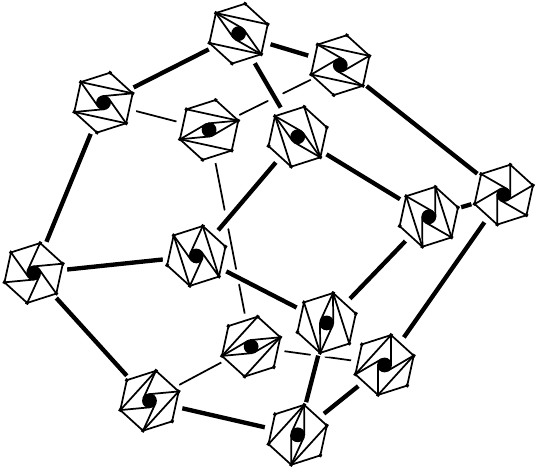}}
	\caption{The type~$D_3$ flip graph interpreted geometrically with centrally symmetric pseudotriangulations of~$\configD_3$. Note that this graph is the $1$-skeleton of the $3$-dimensional associahedron since~$D_3 = A_3$.}
	\label{fig:typeD3associahedron}
\end{figure}
\end{enumerate}

\begin{remark}
\label{rem:typeD}
Our geometric interpretation of type~$D$ associahedra slightly differs from that of S.~Fomin and A.~Zele\-vinsky in~\cite[Section~3.5]{FominZelevinsky-YSystems}\cite[Section~12.4]{FominZelevinsky-ClusterAlgebrasII}. Namely, to obtain their interpretation, we can just remove the disk in the configuration~$\configD_n$ and replace the centrally symmetric pairs of chords~$\{\diagD{p}{L}, \diagD{\bar p}{L}\}$ and~$\{\diagD{p}{R}, \diagD{\bar p}{R}\}$ by long diagonals~$\diag{p}{\bar p}$ colored in red and blue respectively. 
Another classical model for the type~$D$ cluster algebra is given by the decorated triangulations of the punctured $n$-gon from the work of S.~Fomin, M.~Shapiro and D.~Thurston~\cite{FominShapiroThurston}. We can obtain this model by folding our $2n$-gon around its central symmetry. In~\cite{Lebrun}, Y.~Lebrun gives the same proof of the diameter of the type~$D$ associahedron presented in the next section using this punctured $n$-gon model.
\end{remark}


\section{Diameter}

In this section, we present an explicit formula for the diameter of type~$D$ associahedra.

\begin{theorem}
\label{theo:diameter}
The diameter of the $n$-dimensional associahedron of type~$D$ is exactly~$2n-2$~for~all~$n$ greater than~$1$.
\end{theorem}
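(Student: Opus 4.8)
The plan is to establish the two matching inequalities separately: an \emph{upper bound} showing that any two centrally symmetric pseudotriangulations of~$\configD_n$ are joined by at most~$2n-2$ flips, and a \emph{lower bound} exhibiting an explicit pair of pseudotriangulations whose flip distance is at least~$2n-2$. Both arguments will be inductions on~$n$, anchored at the base cases~$n=2$ (where~$\Asso(D_2)$ is a square of diameter~$2 = 2n-2$) and~$n=3$ (where~$D_3=A_3$ and the diameter is~$4 = 2n-2$), which can be read off the flip graph of~$\configD_3$ in~\fref{fig:typeD3associahedron}. Note that neither direction needs the non-leaving-face property: for the upper bound I only need to \emph{construct} one short path, and for the lower bound I bound \emph{all} paths directly.

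For the upper bound, given two centrally symmetric pseudotriangulations~$T$ and~$T'$, the idea is to force them to share a common centrally symmetric chord-pair cutting off a symmetric pair of ears, and then recurse. Concretely, I would show that~$T$ can be turned, in as few flips as possible, into a pseudotriangulation containing a fixed ear chord-pair such as~$\{\diag{0}{2}, \diag{\bar0}{\bar2}\}$, and similarly for~$T'$; such a chord-pair subdivides~$\conv(\configD_n)\ssm\disk$ into two triangular ears (at vertices~$1$ and~$\bar1$) and a copy of~$\configD_{n-1}$. Since flips performed strictly inside the~$\configD_{n-1}$ region never disturb this chord-pair, the flip distance between the two modified pseudotriangulations splits as the sum of the (zero) cost on the ears and the inductive cost~$2(n-1)-2$ on~$\configD_{n-1}$, so the total telescopes to~$2n-2$ provided each of~$T, T'$ can be prepared at a cost of one flip. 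Verifying that a common ear is always reachable within this budget (choosing the ear adaptively when the fixed one is too expensive) is the technical heart of this direction.

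The lower bound is where I expect the real difficulty. I would pick two explicit centrally symmetric pseudotriangulations~$X$ and~$Y$ --- natural candidates being a symmetric \emph{fan} (chords radiating from a single boundary vertex) and a symmetric \emph{zigzag} --- chosen to share no chord-pair. The naive argument (all~$n$ chord-pairs differ, and one flip changes a single chord-pair, so the distance is at least~$n$) is far too weak, and the improvement to~$2n-2$ must capture the fact that one generally cannot flip a chord of~$X$ directly onto a chord of~$Y$: intermediate chords are unavoidable. I would encode this by constructing a potential~$\Phi$ on pseudotriangulations with~$\Phi(X)-\Phi(Y)=2n-2$ and~$|\Phi(T_{i+1})-\Phi(T_i)| \le 1$ for every single flip, so that any flip sequence from~$X$ to~$Y$ has length at least~$2n-2$; alternatively, one can set up a direct induction showing that any geodesic must first spend flips rotating the central structure around the disk before the two symmetric halves can be rearranged.

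The step I expect to be the main obstacle is proving that this potential really changes by at most one per flip. Two type-$D$ features make this delicate. First, central symmetry means every move is a simultaneous flip of a centrally symmetric \emph{pair} of chords, so a quantity that naively jumps by~$2$ must be normalized to count symmetric pairs. Second, the central chords~$\diagD{p}{L}$ and~$\diagD{p}{R}$ tangent to the disk~$\disk$ behave unlike ordinary diagonals, and a pseudotriangulation can interact with~$\disk$ only in constrained ways; I would therefore first determine exactly which configurations of central chords can occur and how a flip can create, destroy, or rotate them, since both the inductive ear-splitting of the upper bound and the potential function of the lower bound hinge on controlling these central flips.
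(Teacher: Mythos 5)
Your outline differs from the paper's proof in both directions (the paper routes every pseudotriangulation through the two stars $\starL$ and $\starR$ and computes distances to them exactly), and in both directions the plan has a gap that is not a technicality but the entire content of the theorem.

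\emph{Upper bound.} Your induction has zero slack: it needs the two pseudotriangulations to acquire a \emph{common} ear pair at a total cost of at most $2$ flips, since $2 + (2(n-1)-2) = 2n-2$. Now one flip creates the ear pair at a vertex $v$ in $T$ exactly when at most one centrally symmetric pair of chords of $T$ crosses it, and the pairs crossing the ear pair at $v$ are precisely those containing a chord incident to $v$ (central chords at $\bar v$ do cross the ear at $v$, but they belong to the same pair as the central chords at $v$). So your preparation claim amounts to: any two centrally symmetric pseudotriangulations $T,T'$ admit a vertex $v$ incident to at most one chord pair of $T$ \emph{and} at most one chord pair of $T'$. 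Each pseudotriangulation separately has such vertices: writing $d_T(v)$ for the number of chords of $T$ incident to $v$, its $2n$ chords, $c_T\ge 2$ of which are central and have a single polygon endpoint, give $\sum_v d_T(v) = 4n - c_T < 4n$. But nothing forces the low-degree vertices of $T$ and $T'$ to coincide: the count only guarantees a vertex with $d_T(v)+d_{T'}(v)\le 3$, one more than your budget, and you offer no argument excluding pairs whose low-degree vertex sets are disjoint. This is exactly the trap that is avoided in type $A$ by using fans instead of ears as hubs; the paper's type-$D$ analogue is Lemma~\ref{lem:distanceToStars}: the distance from $T$ to $\starL$ is exactly $n-\ell$ when $T$ has $\ell\ge1$ left central pairs, and exactly $n+\ell-2$ to $\starR$, after which the upper bound is a short case analysis requiring no common substructure of $T$ and $\widetilde T$ whatsoever.

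\emph{Lower bound.} Two problems. First, the candidates: the extremal pair is $\starL,\starR$, not a fan and a zigzag; a fan's central chords must sit at its apex, so by Lemma~\ref{lem:distanceToStars} a fan lies at distance $n-1$ from both stars and can reach distance $2n-2$ from another pseudotriangulation only under tight conditions that the proposal does not verify. Second, and decisively, the potential: the natural statistic (number of right central pairs minus number of left central pairs) is \emph{not} $1$-Lipschitz under flips. In the paper's own geodesic (Lemma~\ref{lem:distanceStars}), the $(n-1)$-st flip replaces the left pair $\{\diagD{p}{L},\diagD{\bar p}{L}\}$ by the right pair $\{\diagD{(p-1)}{R},\diagD{(\overline{p-1})}{R}\}$, so this statistic jumps by $2$ in a single flip, and a Lipschitz-constant-$2$ potential only yields the trivial bound $n$. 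What makes $2n-2$ provable is the crossing fact your proposal never isolates: $\{\diagD{p}{L},\diagD{\bar p}{L}\}$ crosses $\{\diagD{q}{R},\diagD{\bar q}{R}\}$ whenever $p\ne q$. From it the paper derives not a potential but a two-phase bottleneck count: as long as at least three left pairs remain, \emph{no} flip can create a right pair, so at least $n-2$ flips must elapse before the first right pair appears, and at least $n$ further flips are then needed to create the $n$ right pairs of $\starR$. You correctly sense that the behaviour of central chords under flips is the crux, but without this fact (or an equivalent) neither your potential nor your ``rotation'' argument can start, and with it you would essentially be reconstructing Lemmas~\ref{lem:distanceStars} and~\ref{lem:distanceToStars} of the paper.
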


Define the \defn{left star}~$\starL$ to be the pseudotriangulation formed by all left central chords~$\diagD{p}{L}$ for~${p \in [n] \cup [\bar n]}$. Similarly, the \defn{right star}~$\starR$ is formed by all right central chords~$\diagD{p}{R}$ for~$p \in [n] \cup [\bar n]$.

\begin{lemma}
\label{lem:distanceStars}
The flip distance between the left star~$\starL$ and the right star~$\starR$ is precisely~$2n-2$.
\end{lemma}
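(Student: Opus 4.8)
The plan is to prove the two bounds $d(\starL,\starR)\le 2n-2$ and $d(\starL,\starR)\ge 2n-2$ separately, both resting on a single structural observation about how central chords may coexist. For a centrally symmetric pseudotriangulation $T$ of $\configD_n$, let $a(T)$ denote the number of centrally symmetric pairs of left central chords $\{\diagD{p}{L},\diagD{\bar p}{L}\}$ it contains, and let $b(T)$ denote the number of pairs of right central chords. Thus $a(\starL)=n$, $b(\starL)=0$ and $a(\starR)=0$, $b(\starR)=n$, and the whole argument amounts to tracking the trajectory of $(a(T),b(T))$ along a flip path.

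The key lemma I would establish first is a crossing statement: for distinct vertices $p\ne q$ the chords $\diagD{p}{L}$ and $\diagD{q}{R}$ always cross, whereas two central chords on the same side never cross, and $\diagD{p}{L}$ and $\diagD{p}{R}$ meet only at $p$. I would prove this directly from the geometry of the tangent segments to~$\disk$, or deduce it from the folding to tagged triangulations of the punctured $n$-gon described in Remark~\ref{rem:typeD}. It follows that a pseudotriangulation cannot contain a left and a right central chord at two distinct vertices, and that if it contains both $\diagD{p}{L}$ and $\diagD{p}{R}$ then these (together with their images) are its only central chords. Hence the pair $(a(T),b(T))$ is confined to the \emph{feasible set} $\{(a,0):0\le a\le n\}\cup\{(0,b):0\le b\le n\}\cup\{(1,1)\}$: an $L$-shaped union of two axes together with the single ``doubled'' point $(1,1)$.

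For the lower bound, note that a flip replaces one centrally symmetric pair by another, so it changes each of $a$ and $b$ by at most one; combined with feasibility, this shows that every flip moves $(a(T),b(T))$ to an adjacent vertex of, or fixes it in, the graph $G$ on the feasible set whose edges record the one-flip transitions. Consequently $d(\starL,\starR)$ is at least the graph distance in $G$ from $(n,0)$ to $(0,n)$. Inspecting $G$, the two axes are joined only through the origin $(0,0)$, which costs $n+n=2n$, or through $(1,1)$, which is adjacent to $(2,0)$ and $(0,2)$ and yields a crossing of cost $(n-2)+2+(n-2)=2n-2$ (any hypothetical direct bridge $(1,0)$--$(0,1)$ would only give the longer value $2n-1$). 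Therefore this graph distance equals $2n-2$, which is the desired lower bound.

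For the upper bound I would realize this optimal route by explicit flips: from $\starL$, successively flip $n-2$ of the left central pairs to boundary chords to reach a pseudotriangulation with $a=2$ (the state $(2,0)$); perform the two rotation flips $(2,0)\to(1,1)\to(0,2)$ that convert a left pair into a right pair at a neighbouring vertex while passing through the doubled configuration; then symmetrically flip $n-2$ boundary chords into the remaining right central pairs to arrive at $\starR$. The main work---and the main obstacle---is the crossing lemma of the second paragraph, since it is precisely what forces the $(a,b)$-trajectory through the bottleneck $(1,1)$ and thereby produces the extra $2(n-2)$ flips beyond the naive count of $n$ given by facet counting. A secondary point of care is verifying that each edge of the displayed route is genuinely realizable by a valid flip, in particular that the intermediate boundary chords can be chosen compatibly; I would handle this by an explicit centrally symmetric choice of the ordinary chords at each step, which also covers the degenerate case $n=2$ uniformly.
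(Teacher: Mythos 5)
Your strategy coincides with the paper's: the same explicit path gives the upper bound (your route $(2,0)\to(1,1)\to(0,2)$ costing $(n-2)+2+(n-2)$ is exactly the paper's sequence of flips), and the lower bound rests on the same left/right crossing obstruction, which you repackage as a shortest-path computation for the statistic $(a(T),b(T))$ on its feasible set. However, the crossing lemma that you yourself single out as ``the main work'' is false as stated. It is \emph{not} true that $\diagD{p}{L}$ and $\diagD{q}{R}$ cross whenever $p\ne q$: an individual left chord $\diagD{p}{L}$ crosses exactly \emph{one} chord of each right pair $\{\diagD{q}{R},\diagD{\bar q}{R}\}$ with $\{q,\bar q\}\ne\{p,\bar p\}$ and is disjoint from the other, and it crosses neither $\diagD{p}{R}$ nor $\diagD{\bar p}{R}$. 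Your own argument exhibits a counterexample: the ``doubled'' state $(1,1)$ through which your path passes is the crossing-free set $\{\diagD{p}{L},\diagD{\bar p}{L},\diagD{p}{R},\diagD{\bar p}{R}\}$, which contains a left and a right central chord at the two distinct vertices $p$ and $\bar p$. So both the lemma you propose to prove ``directly from the geometry of the tangent segments'' and its stated consequence (that no pseudotriangulation contains a left and a right central chord at two distinct vertices) would fail, and the proof attempt would collapse at the step you correctly identify as carrying all the weight.

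The statement that is true---and is what the paper actually proves and uses---lives at the level of centrally symmetric pairs: for $\{p,\bar p\}\ne\{q,\bar q\}$, the pair $\{\diagD{p}{L},\diagD{\bar p}{L}\}$ crosses the pair $\{\diagD{q}{R},\diagD{\bar q}{R}\}$, precisely because each left chord crosses one of the two right chords. Since every pseudotriangulation in play is centrally symmetric (chords come in antipodal pairs), this pair-level statement still yields your feasibility claim: a centrally symmetric pseudotriangulation containing both kinds of central chords must have $(a,b)=(1,1)$, with both pairs at the same antipodal vertex pair. With the lemma restated this way---indexing by antipodal pairs $\{p,\bar p\}$ rather than by single vertices---everything downstream of it (the $L$-shaped feasible set with the extra point $(1,1)$, the bottleneck analysis giving $2n-2$, the explicit realizing path) goes through unchanged and agrees in substance with the paper's proof. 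The gap is therefore local and repairable, but as written the central claim is wrong rather than merely unproved.
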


\begin{proof}
We first claim that there is a path of exactly $2n-2$ flips between~$\starL$ and~$\starR$. This path is illustrated in \fref{fig:stars}, where the label on each arrow records the number of flips used to pass from one pseudotriangulation to the next one. Consider an arbitrary pair of opposite points~$\{p, \bar p\}$ of~$\configD_n$. Starting from~$\starL$, we successively flip the chord~$\diagD{(p+i)}{L}$ to the chord~$\diag{p}{p+i+1}$ for~$i \in [n-2]$, then the chord~$\diagD{p}{L}$ to the chord~$\diagD{(p-1)}{R}$, then the chord~$\diagD{(p-1)}{L}$ to the chord~$\diagD{\bar p}{R}$, and finally we successively flip the chord~$\diag{p}{\bar p - i}$ to the chord~$\diagD{(\bar p - i - 1)}{R}$ for~$i \in [n-2]$. Of course, we perform simultaneously the centrally symmetric flips of those just described. We thus used $(n-2)+1+1+(n-2) = 2n-2$ flips to transform~$\starL$ into~$\starR$.

\begin{figure}[h]
	\centerline{\includegraphics[width=1\textwidth]{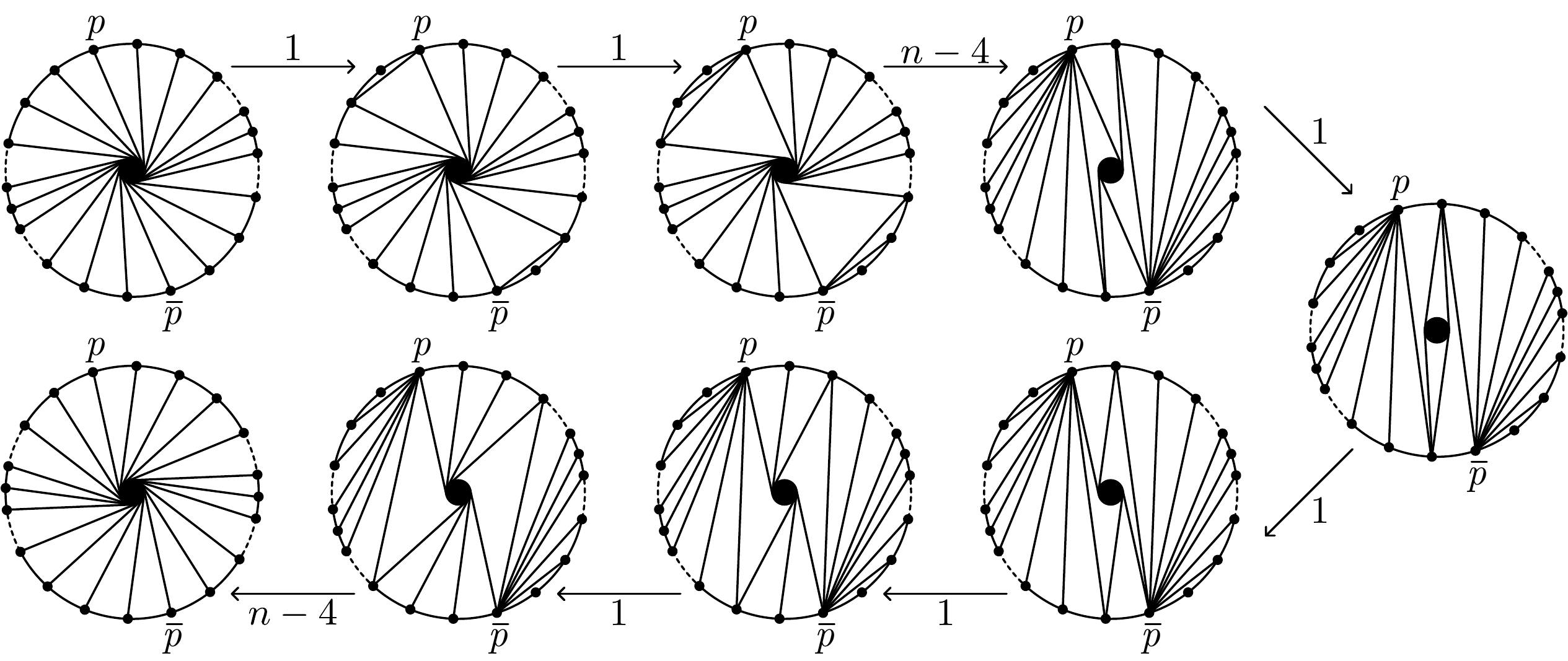}}
	\caption{The distance between the two stars~$\protect \starL$ and~$\protect \starR$ is $2n-2$.}
	\label{fig:stars}
\end{figure}

We now show that it is impossible to use less flips to transform~$\starL$ into~$\starR$. Observe that a pair of left central chords~$\{\diagD{p}{L}, \diagD{\bar p}{L}\}$ crosses any pair of right central chords~$\{\diagD{q}{R}, \diagD{\bar q}{R}\}$ except when~$p = q$. Therefore, if a centrally symmetric pseudotriangulation has strictly more than two pairs of left central chords then none of the possible flips produces a pair of right central chords. This means that we need to apply at least $n-2$ flips to the left star until we are able to make a flip that produces a pair of right central chords. Since the right star has $n$ pairs of right central chords, we need at least $n$ additional flips to produce it. This proves that any path between~$\starL$ and~$\starR$ uses at least $(n-2)+n= 2n-2$ flips.
\end{proof}

\begin{lemma}
\label{lem:distanceToStars}
Let~$T$ be a centrally symmetric pseudotriangulation of~$\configD_n$.
\begin{enumerate}[(i)]
\item If~$T$ contains~$\ell \ge 1$ centrally symmetric pairs of left central chords~$\{\diagD{p}{L}, \diagD{\bar p}{L}\}$, then~$T$ is precisely at distance~$n-\ell$ from the left star~$\starL$ and precisely at distance~$n+\ell-2$ from the right star~$\starR$.
\item If~$T$ contains~$r \ge 1$ centrally symmetric pairs of right central chords~$\{\diagD{p}{R}, \diagD{\bar p}{R}\}$, \mbox{then~$T$ is precisely} at distance~$n-r$ from the right star~$\starR$ and precisely at distance~$n+r-2$ from the left star~$\starL$.
\end{enumerate}
\end{lemma}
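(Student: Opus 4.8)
The plan is to prove part (i) and deduce part (ii) by symmetry, since the configuration~$\configD_n$ admits a reflection that swaps left central chords with right central chords and therefore interchanges the two stars. So I focus on a pseudotriangulation~$T$ containing exactly~$\ell \ge 1$ centrally symmetric pairs of left central chords, and I want to show~$d(T, \starL) = n - \ell$ and~$d(T, \starR) = n + \ell - 2$.

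For the distance to~$\starL$, the lower bound should come from a monovariant. Since~$\starL$ has~$n$ pairs of left central chords and each single (centrally symmetric) flip can change the number of such pairs by at most one, going from~$\ell$ pairs to~$n$ pairs requires at least~$n - \ell$ flips. For the matching upper bound, I would argue that one can always flip~$T$ so as to strictly increase the number of left central pairs whenever it is below~$n$: the idea is that among the chords of~$T$ that are not yet left central, there is always one whose flip in its incident pseudoquadrangle produces a left central chord, using the geometric flip description from item~\eqref{item:pseudoflip} of the model. Iterating gives a path of length exactly~$n - \ell$ to~$\starL$. The key structural input is that the left central chords are pairwise non-crossing and that they can be introduced greedily; I would likely organize this as an induction on~$n - \ell$.

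For the distance to~$\starR$, the lower bound combines two observations already available. First, by the crossing observation in the proof of Lemma~\ref{lem:distanceStars}, a pair of left central chords crosses every pair of right central chords except the one sharing its index, so as long as~$T$ has strictly more than two pairs of left central chords, no flip can create a pair of right central chords. Since~$T$ starts with~$\ell$ such pairs, we must first spend at least~$\ell - 2$ flips to come down to two left central pairs before any right central pair can appear; then, as in Lemma~\ref{lem:distanceStars}, producing all~$n$ right central pairs of~$\starR$ costs at least~$n$ more flips, giving~$(\ell - 2) + n = n + \ell - 2$. The upper bound should follow by first flipping~$T$ down to~$\starL$ in~$n - \ell$ steps and then using the explicit~$2n - 2$ path of Lemma~\ref{lem:distanceStars} from~$\starL$ to~$\starR$ — but that naive route gives~$(n - \ell) + (2n - 2)$, which is too long, so instead I would build a direct path that first reduces to two left central pairs and then converts to right central chords, mirroring the construction in \fref{fig:stars}.

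The main obstacle I expect is the upper bound matching~$n + \ell - 2$ exactly: the lower bound arguments are clean monovariants, but exhibiting an explicit flip sequence of precisely the right length from an \emph{arbitrary}~$T$ (rather than from the highly symmetric star) requires care. In particular, I must ensure that the~$\ell - 2$ flips used to reduce the number of left central pairs can be chosen so that they simultaneously make progress toward~$\starR$, rather than being wasted — otherwise the two phases do not telescope to~$n + \ell - 2$. Controlling this interaction, and checking the small boundary cases~$\ell = 1$ and~$\ell = 2$ where the ``$\ell - 2$'' term degenerates, is where the real work lies; I would handle it by carefully tracking which pseudotriangle each flip modifies and appealing throughout to the geodesic-corner characterization of flips in item~\eqref{item:pseudoflip}.
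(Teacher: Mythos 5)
Your outline follows the paper's proof for the easy halves (symmetry for part~(ii), the monovariant lower bound and the greedy upper bound for the distance to~$\starL$, and the $(\ell-2)+n$ count for the lower bound to~$\starR$ when~$\ell \ge 2$), but there is a genuine gap in the case~$\ell = 1$, and it is not the bookkeeping issue you anticipate: it requires a structural fact that your proposal never identifies. The paper's proof states and uses that \emph{if~$T$ contains exactly one pair of left central chords~$\{\diagD{p}{L},\diagD{\bar p}{L}\}$, then~$T$ contains exactly one pair of right central chords} (necessarily~$\{\diagD{p}{R},\diagD{\bar p}{R}\}$, by the crossing rule). Indeed, a single centrally symmetric pair of tangent chords cannot by itself bound pseudotriangles around the central disk, so at least two central pairs are present, and when~$\ell=1$ the second one must be a right pair with the same index. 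Without this fact, the case~$\ell=1$ fails on both sides. Your lower-bound argument, taken literally (``producing all~$n$ right central pairs of~$\starR$ costs at least~$n$ more flips''), proves~$d(T,\starR) \ge n$, which is false: the distance is~$n-1$, and the correct count is~$n-r$ flips with~$r=1$, because~$T$ already shares one right central pair with~$\starR$. Worse, the upper bound~$d(T,\starR) \le n-1$ is unprovable without knowing~$r \ge 1$: if~$T$ contained no right central pair, all~$n$ of them would have to be created, forcing~$d(T,\starR) \ge n$. So the statement for~$\ell=1$ is essentially equivalent to this structural fact, and no amount of care about which pseudotriangle each flip modifies will substitute for it.

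Second, the upper bound~$d(T,\starR) \le n+\ell-2$ for~$\ell \ge 2$, which you correctly flag as ``where the real work lies,'' is left undone in your proposal. Here is how it closes (this is what the paper's appeal to the construction of Lemma~\ref{lem:distanceStars} amounts to): argue by induction on~$\ell$. If~$\ell \ge 2$, flip any left central pair of~$T$. The new pair cannot be another left central pair: by definition it is compatible with~$T \ssm \{\diagD{p}{L},\diagD{\bar p}{L}\}$ and does not belong to~$T$, and since left central chords are pairwise non-crossing it would then be compatible with all of~$T$, contradicting the maximality of~$T$ as a crossing-free set. Hence the resulting pseudotriangulation~$T'$ has exactly~$\ell-1 \ge 1$ left central pairs, and by induction $d(T,\starR) \le 1 + d(T',\starR) \le 1 + (n+\ell-3) = n+\ell-2$, the base case~$\ell=1$ being the greedy path of length~$n-1$ that flips the~$n-1$ non-right-central pairs one at a time, each flip producing a right central pair (which again needs~$r=1$). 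This also dissolves your worry that the~$\ell-2$ reduction flips must ``make progress toward~$\starR$'': they need not do anything beyond removing left central pairs. One last point you should make explicit: your ``$n$ additional flips'' in the lower bound for~$\ell \ge 2$ tacitly uses that~$r=0$ in that case, which is true but deserves a sentence --- any right central pair in~$T$ would cross at least one of the two or more left central pairs.
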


\begin{proof}
We prove Point (i) of the lemma, the other point follows by symmetry. Assume thus that~$T$ contains~$\ell \ge 1$ centrally symmetric pairs of left central chords.

Every non-left-central pair of chords in~$T$ needs to be flipped at least once in a path connecting~$T$ and~$\starL$. Moreover, it is possible to flip them one at a time such that each flip produces a pair of left central chords: at each step, flip a pair of centrally symmetric internal diagonals incident to a pseudotriangle touching the central disk. This shows that $T$ is precisely at distance~$n-\ell$ from the left star~$\starL$.

The proof that~$T$ is at distance~$n+\ell-2$ from~$\starR$ uses similar arguments to those in the proof of Lemma~\ref{lem:distanceStars}. If $\ell=1$, then~$T$ has exactly one pair of right central chords. In this case, the distance between~$T$ and~$\starR$ is~$n+\ell-2=n-1$ as desired. A geodesic between~$T$ and~$\starR$ can be obtained by successively flipping the $n-1$ non-right-central pairs of chords in $T$ such that each flip produces a pair of right central chords. If $\ell \geq 2$, then we need to apply at least~$\ell-2$ flips to~$T$ until we are able to make a flip that produces a pair of right central chords. Since the right star has~$n$ pairs of right central chords, we need at least~$n$ additional flips to produce it. Hence, the distance between~$T$ and~$\starR$ is~$n+\ell-2$.
\end{proof}

\begin{proof}[Proof of Theorem~\ref{theo:diameter}]
By Lemma~\ref{lem:distanceStars}, the diameter of the $n$-dimensional associahedron of type~$D$ is at least~$2n-2$. It remains to show that the distance between any two centrally symmetric pseudotriangulations~$T$ and~$\widetilde T$ is at most~$2n-2$. Let~$\ell$ and~$\widetilde \ell$ (resp.~$r$ and~$\widetilde r$) be the number of left (resp.~right) central pairs of chords in~$T$ and~$\widetilde T$ respectively. If~$T$ and~$\widetilde T$ contain a central pair of chords of the same kind, say left, then they can be connected by a path passing through the left star~$\starL$ of length 
\[n-\ell+n-\widetilde \ell \leq 2n-2.\]
If not, we can assume without loss of generality that $\ell\geq 1$ and $\widetilde r\geq 1$. By Lemma~\ref{lem:distanceToStars}, there are two paths connecting $T$ and $\widetilde T$ passing through the star triangulations~$\starL$ and~$\starR$ respectively~of~length 
\[
(n-\ell) + (n+\widetilde r-2)=2n-2-\ell+\widetilde r \qquad \text{and}\qquad (n+\ell-2)+(n-\widetilde r)=2n-2+\ell-\widetilde r.
\]
Clearly, one of these two numbers is less than or equal to $2n-2$. 
\end{proof}


\section{Non-leaving-face property}

\enlargethispage{.4cm}
This section is devoted to the following natural property related to diameter and graph distance on polytopes.

\begin{definition}
A polytope~$P$ has the \defn{non-leaving-face property} if any geodesic connecting two vertices in the graph of~$P$ stays in the minimal face of~$P$ containing both.
\end{definition}

Many classical polytopes have the non-leaving-face property:
\begin{enumerate}[(i)]
\item the $n$-gon: proper faces are segments;
\item the simplex: any two vertices are at distance~$1$;
\item any simplicial polytope: any two vertices belonging to a proper face are at distance~$1$;
\item the $n$-cube: the distance between any two vertices is the Hamming distance between them (\ie the number of coordinates where they differ);
\item the permutahedron: the distance between two permutations~$\sigma, \widetilde \sigma$ in the permutahedron is the number of inversions of~$\sigma^{-1}\widetilde \sigma$, and the minimal face containing~$\sigma$ and~$\widetilde \sigma$ is the ordered partition~$\set{\sigma([n_{i+1}] \ssm [n_i])}{i \in [p]}$, where~$0 = n_0 < n_1 < \dots < n_p = n$ is the finest subdivision of~$[n]$ such that~$\sigma([n_{i+1}] \ssm [n_i]) = \widetilde \sigma([n_{i+1}] \ssm [n_i])$ for all~$i \in [p]$; leaving the minimal face containing~$\sigma$ and~$\widetilde \sigma$ thus introduces useless inversions and therefore lengthen the way from~$\sigma$ to~$\widetilde \sigma$;
\item the associahedron: see~\cite[Lemma~3]{SleatorTarjanThurston} and Theorem~\ref{thm:nonLeavingFaceProperty};
\item the cyclohedron (which we refer to as type~$B/C$ associahedron): see Theorem~\ref{thm:nonLeavingFaceProperty}.
\end{enumerate}
However, there are also many examples of polytopes that do not satisfy the non-leaving-face property, for example a pyramid over a hexagon. Other examples related to associahedra are presented in Section~\ref{sec:ctrexms}.

Note that the non-leaving-face property implies the classical non-revisiting property (between any two vertices, there exists a path which does not leave and revisit any facet), which in turn implies the Hirsch bound on the diameter of the polytope~\cite{Klee}, see also~\cite{Santos-surveyHirsch}. The reverse implications are wrong, consider \eg a pyramid over a square. The Hirsch bound for generalized associahedra also follows from their vertex-decomposability~\cite{CeballosLabbeStump}, or from general results on flag polytopes by K.~Adiprasito and B.~Benedetti~\cite{AdiprasitoBenedetti}. We focus here on the non-leaving-face property and refer to~\cite{Santos-surveyHirsch} for further details.

In this section, we show that all associahedra of type~$A$, $B/C$, or $D$ have the non-leaving-face property. This property was proven in type~$A$ by D.~Sleator, R.~Tarjan, and W.~Thurston~\cite[Lemma~3]{SleatorTarjanThurston}. We generically use the term ``triangulation'' to refer to the geometric model for clusters in type~$A$, $B/C$ or~$D$: classical triangulations of the $(n+3)$-gon in type~$A_n$, centrally symmetric triangulations of the $2n$-gon in type~$B_n/C_n$, and centrally symmetric pseudotriangulations of the configuration~$\configD_n$ in type~$D_n$. Similarly, ``diagonal'' refers to a diagonal in type~$A_n$, to a centrally symmetric pair of diagonals or a long diagonal in type~$B_n/C_n$, and to a centrally symmetric pair of chords of the configuration~$\configD_n$ in type~$D_n$.

\begin{theorem}
\label{thm:nonLeavingFaceProperty}
All associahedra of type~$A$, $B/C$, $D$ have the non-leaving-face property. In other words, no common diagonal between two triangulations~$T, \widetilde T$  is flipped in a geodesic between~$T$~and~$\widetilde T$.
\end{theorem}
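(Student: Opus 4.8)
The plan is to prove the non-leaving-face property type-by-type, reducing each case to the key claim that if two triangulations $T$ and $\widetilde T$ share a common diagonal $\delta$, then there is a geodesic between them that never flips $\delta$. Since this implies that \emph{every} geodesic avoids flipping $\delta$ (any geodesic realizing the minimal flip distance must, if it flipped $\delta$, have length strictly greater than a geodesic that does not — see below), establishing the existence of a $\delta$-preserving geodesic of minimal length suffices. The central technique, following the \emph{normalization} idea of Sleator--Tarjan--Thurston, is this: given any path $T = T_0, T_1, \dots, T_k = \widetilde T$ that flips the common diagonal $\delta$ at some steps, I will produce a new path of length \emph{at most} $k$ from $T$ to $\widetilde T$ that flips $\delta$ strictly fewer times. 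Iterating removes all flips of $\delta$ without increasing length, proving the claim.

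\medskip

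For type $A$, the common diagonal $\delta$ splits the $(n+3)$-gon into two subpolygons $P_1$ and $P_2$, and every triangulation containing $\delta$ restricts to a triangulation on each side. The normalization replaces a path through the full polygon by the ``projection'' onto the two sides: whenever a flip in the original path touches $\delta$, I reroute it so that $\delta$ is never removed, using that flips on opposite sides of $\delta$ commute and that a flip creating or destroying $\delta$ can be deferred or cancelled. This is exactly \cite[Lemma~3]{SleatorTarjanThurston}, so for type $A$ I would simply invoke that result. The real work is in types $B/C$ and $D$.

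\medskip

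For type $B/C$ and type $D$, the same normalization strategy applies, but I must handle the \emph{centrally symmetric} structure and, in type $D$, the central disk with its left/right central chords. Here a ``diagonal'' is a centrally symmetric pair (or a long diagonal in $B/C$), so a common diagonal $\delta$ cuts the symmetric configuration into symmetric pieces. I would take a shortest path from $T$ to $\widetilde T$, locate the first flip that removes $\delta$ and the subsequent flip that restores it, and argue that the intervening flips can be rearranged so that $\delta$ is never removed: flips supported in regions that $\delta$ separates commute, and any flip interacting with $\delta$ can be pushed past it or shown redundant. The subtle point specific to type $D$ is the interaction with the central chords $\diagD{p}{L}, \diagD{p}{R}$, whose flip behavior near the disk (as catalogued in \fref{fig:typeDflip}) differs from ordinary chords; I must verify that the commutation and deferral arguments survive when $\delta$ or the rerouted flips involve these central chords. \textbf{This case analysis around the central disk is the step I expect to be the main obstacle}, since the pseudotriangulation flips near the hole do not reduce cleanly to the convex-polygon picture and require checking each local configuration of \fref{fig:typeDflip} separately. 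Once the length-non-increasing, $\delta$-flip-reducing move is established in all local cases, iterating it produces a geodesic avoiding $\delta$; since geodesics are by definition shortest, no geodesic can flip $\delta$, which is the desired non-leaving-face statement.
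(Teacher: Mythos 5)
Your overall strategy---a Sleator--Tarjan--Thurston style normalization carried out type-by-type---is the same as the paper's, but your top-level reduction has a genuine gap: you reduce the theorem to the \emph{existence} of a geodesic between $T$ and $\widetilde T$ that never flips the common diagonal $\delta$, and this is strictly weaker than what the theorem asserts, namely that \emph{no} geodesic flips $\delta$. Your iteration step turns a path of length $k$ flipping $\delta$ into a path of length \emph{at most} $k$ flipping $\delta$ strictly fewer times; iterating this only shows that \emph{some} geodesic avoids $\delta$. It cannot rule out a different geodesic of the same minimal length that does flip $\delta$: if a geodesic $\gamma$ of length $d$ flips $\delta$, your construction merely yields another geodesic of length $d$ avoiding $\delta$, and $\gamma$ remains a geodesic---no contradiction. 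The parenthetical justification you give ("any geodesic \dots must, if it flipped $\delta$, have length strictly greater than a geodesic that does not") is incoherent as written, since two geodesics between the same pair of vertices have equal length by definition. The distinction is not pedantic: for a pyramid over a square (the very example the paper uses to separate the non-leaving-face property from the non-revisiting property), a geodesic between two opposite base vertices staying inside the square exists, yet the route through the apex is an equally short geodesic that leaves the face, so the non-leaving-face property fails there.

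What is needed, and what the paper proves, is a \emph{strict} shortening: any path that flips a common diagonal can be replaced by a path shorter by at least $2$, hence is not a geodesic at all. This is exactly what the normalization delivers once its properties are isolated as in Proposition~\ref{prop:normalization} (P0)--(P3). If a path $T = T_0, \dots, T_k = \widetilde T$ removes $\delta$ at the flip $T_j \to T_{j+1}$ and (since $\delta \in \widetilde T$) restores it at some later flip $T_i \to T_{i+1}$, then property (P3) forces both $\normalize_\delta(T_j) = \normalize_\delta(T_{j+1})$ and $\normalize_\delta(T_i) = \normalize_\delta(T_{i+1})$; since the endpoints are fixed by (P1) and consecutive images coincide or are adjacent by (P2), the projected path collapses at two distinct steps and has length at most $k-2$ after erasing repetitions. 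The paper packages the same mechanism as Proposition~\ref{prop:enteringFaceProperty}: the flip restoring $\delta$ can be taken as the first step of a geodesic to $\widetilde T$, so the excursion $T_j \to T_{j+1}$ and back wastes two steps. Your plan is repairable by replacing "length at most $k$, strictly fewer $\delta$-flips" with "length at most $k-2$," which your own cancellation of the remove/restore pair should produce. Note finally that you defer the entire construction in types $B/C$ and $D$ (the explicit rubber-band normalization and the case analysis around the central disk and the chords $\diagD{p}{L}$, $\diagD{p}{R}$); you rightly flag it as the main obstacle, but that is where the bulk of the paper's proof lies, so as it stands the proposal establishes neither the correct reduction nor the key lemma in the new types.
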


\begin{remark}
Since the type~$I_2(n)$ associahedra are all polygons, this proposition proves that all infinite families of associahedra have the non-leaving-face property. 
We also checked this property for the exceptional types~$E_6$, $F_4$, $H_3$ and~$H_4$. We use a variant of breadth first search algorithm to compute all geodesics from a given point and we check along the way that these geodesics stay in the smallest face containing their endpoints. For concrete computations, we use the subword complex model for finite type cluster algebras~\cite{CeballosLabbeStump} as implemented by C.~Stump in Sage~\cite{sage}. This time-consuming verification (about half a day for~$E_6$) was still to be done on types~$E_7$ and~$E_8$ when N.~Williams announced a type-free proof of the non-leaving-face property for all generalized associahedra~\cite{Williams} using sortable elements and Cambrian lattices~\cite{Reading-CambrianLattices}. Below, we stick to our original type-by-type proof which involves more geometric arguments.
\end{remark}

Theorem~\ref{thm:nonLeavingFaceProperty} is a consequence of the following stronger statement. In fact Proposition~\ref{prop:enteringFaceProperty} even shows that any path leaving the minimal face containing two vertices has at least~$2$ more steps than the geodesic connecting them.

\begin{proposition}
\label{prop:enteringFaceProperty}
Let~$T$ and~$\widetilde T$ be two triangulations of type~$A$, $B/C$, or $D$, and~$\chi$ be a diagonal in~$T \ssm \widetilde T$. If the flip of~$\chi$ in~$T$ produces a diagonal that belongs to~$\widetilde T$, then there exists a geodesic between~$T$ and~$\widetilde T$ which starts by the flip of~$\chi$.
\end{proposition}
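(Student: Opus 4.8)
The plan is to reduce the statement to a distance identity and prove it by induction on the flip distance, the whole difficulty being concentrated in a local, type-dependent case analysis. Write $d \eqdef d(T,\widetilde T)$ for the flip distance and set $T' \eqdef (T \ssm \chi) \cup \chi'$, the triangulation obtained from $T$ by flipping~$\chi$. Since $\chi' \in \widetilde T$, every common diagonal of $T$ and $\widetilde T$ is compatible with $\chi'$, so $T'$ is a genuine triangulation sharing with $\widetilde T$ all of $T \cap \widetilde T$ together with $\chi'$. The proposition is equivalent to the assertion that the flip $\chi \mapsto \chi'$ begins a geodesic, \ie that $d(T',\widetilde T) = d-1$. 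The inequality $d(T',\widetilde T) \ge d-1$ is immediate from the triangle inequality, so the real content is the upper bound $d(T',\widetilde T) \le d-1$.

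The key observation I would isolate first is a uniqueness property: because $(T \ssm \chi)\cup\chi'$ is a triangulation, the diagonal $\chi'$ crosses exactly one diagonal of~$T$, namely~$\chi$. Two consequences follow. First, flipping $\chi$ in $T$ is forced to produce $\chi'$, since the flip partner of a chord is unique. Second, the only single flip in $T$ that creates $\chi'$ is the flip of~$\chi$: if flipping some $e \in T$ yields $\chi'$, then $(T \ssm e)\cup\chi'$ is a triangulation, so $\chi'$ crosses only~$e$, forcing $e = \chi$. In particular, as soon as a geodesic out of $T$ has a first flip that creates $\chi'$, that first flip is automatically $\chi \mapsto \chi'$.

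I would then induct on~$d$. The base case $d=1$ is forced: here $T \ssm \widetilde T = \{\chi\}$, so the unique flip from $T$ to $\widetilde T$ removes $\chi$ and, by the uniqueness property, produces~$\chi'$. For the inductive step, take any geodesic $T = T_0 \to T_1 \to \cdots \to T_d = \widetilde T$ and let $e_0 \mapsto f_0$ be its first flip. If $e_0 = \chi$ then $f_0 = \chi'$ is forced and we are done. Otherwise $\chi \in T_1 \ssm \widetilde T$, and in the generic situation where $\chi$ does not bound either of the two pseudotriangles incident to~$e_0$, the triangulation $T_1$ still flips $\chi$ to~$\chi'$, so the induction hypothesis supplies a length-$(d-1)$ geodesic from $T_1$ to $\widetilde T$ starting with $\chi \mapsto \chi'$, whose second vertex is $(T_1 \ssm \chi)\cup\chi' = (T' \ssm e_0) \cup f_0$. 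Since $e_0 \mapsto f_0$ remains a valid flip in $T'$ in this non-interfering situation, splicing yields a geodesic $T \to T' \to (T' \ssm e_0)\cup f_0 \to \cdots \to \widetilde T$ of length~$d$ beginning with $\chi \mapsto \chi'$, giving $d(T',\widetilde T)\le d-1$.

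The hard part will be the interference case, in which the first flip $e_0 \mapsto f_0$ takes place in a pseudoquadrangle bounded by~$\chi$ (so that flipping $e_0$ changes the flip partner of~$\chi$, or $f_0$ crosses~$\chi'$). Here $\chi$, $\chi'$, $e_0$ and $f_0$ all lie in a bounded region spanned by a few pseudotriangles, and the claim has to be checked by a direct local analysis: one reorders the first two flips of the geodesic, or exhibits an alternative geodesic, so that $\chi \mapsto \chi'$ can still be performed first without increasing the length. I expect this local analysis to be the crux, and to be the reason the argument is carried out type by type. Indeed, in types $B/C$ and~$D$ the chords come in centrally symmetric pairs and there are the special configurations near the central disk (the various kinds of flips of \fref{fig:typeDflip}), so both the finite list of local pictures to verify and the bookkeeping of central symmetry are genuinely type-dependent, even though the overall inductive scheme is uniform.
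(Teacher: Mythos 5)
There is a genuine gap: your ``interference case'' is not an afterthought to be settled by a routine local check --- it is the entire content of the proposition, and your inductive framework cannot reach it. Concretely, suppose the geodesic's first flip replaces~$e_0$ by~$f_0$ where~$e_0$ is an edge of one of the two (pseudo)triangles of~$T$ incident to~$\chi$. Then in~$T_1$ the flip partner of~$\chi$ is no longer~$\chi'$ but some other diagonal~$\chi''$, and nothing guarantees~$\chi'' \in \widetilde T$; so the hypothesis of your induction (``the flip of~$\chi$ produces a diagonal of~$\widetilde T$'') fails for the pair~$(T_1, \chi)$, and the induction hypothesis simply cannot be invoked. Nor can you fix this by ``reordering the first two flips'': in this case the flip $e_0 \mapsto f_0$ need not even be a valid flip in~$T' = (T \ssm \chi) \cup \chi'$, and more importantly the obstruction is not local to the first two steps --- the geodesic from~$T_1$ onward is arbitrary, so any repair must control how~$\chi'$ eventually gets created far down the path. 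An exchange argument that only inspects a bounded region around~$\chi$ has no handle on this.

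This is exactly the difficulty that the paper's proof is engineered to avoid. The paper first constructs, type by type, a \emph{normalization} map~$\normalize_{\chi'}$ (Proposition~\ref{prop:normalization}): a projection of the whole flip graph onto the star of~$\chi'$ satisfying (P0)--(P3), in particular that adjacent triangulations have equal or adjacent images, and that a flip creating~$\chi'$ is collapsed. The proof of Proposition~\ref{prop:enteringFaceProperty} is then global rather than inductive: apply~$\normalize_{\chi'}$ to every vertex of a given geodesic $T = T_0, T_1, \dots, T_k = \widetilde T$; by (P3) the sequence starts with the flip of~$\chi$ (since $T$ and $T'$ are adjacent with ${\chi' \in T' \ssm T}$), by (P1) it ends at~$\widetilde T$, and since~$\chi'$ is created at least once along the original geodesic, (P3) forces at least one collapse, so after erasing duplicates the normalized path is no longer than~$k$. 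Your generic-case splicing is correct as far as it goes, but without a substitute for the normalization --- \ie a distance-non-increasing retraction onto the triangulations containing~$\chi'$, or some equivalent global device --- the interference case remains unproved, and that case is where all the type-dependent geometric work of the paper actually lives.
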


The proof of Proposition~\ref{prop:enteringFaceProperty} relies on a normalization argument, generalizing the normalization of~\cite[Lemma~3]{SleatorTarjanThurston} for classical triangulations. We first introduce the normalization on triangulations of types~$A$, $B/C$ and~$D$, and then return to the proof of Theorem~\ref{thm:nonLeavingFaceProperty} and Proposition~\ref{prop:enteringFaceProperty} in Section~\ref{subsec:proofs}.


\subsection{Normalization}

A normalization is a useful tool to transform a geodesic between two triangulations into a geodesic starting with a prescribed flip. In particular, it is a projection from the graph of the associahedron to the graph of one of its facets. Recall that we use the terms ``triangulation'' and ``diagonal'' generically for the corresponding geometric models in types~$A$, $B/C$, and~$D$. 

\begin{proposition}
\label{prop:normalization}
For any type~$A$, $B/C$ or~$D$, and for any diagonal~$\chi$, there exists a \defn{normalization}~$\normalize_\chi$, that is, a map~$T \mapsto \normalize_\chi(T)$ satisfying the following properties:
\begin{enumerate}[(P1)]
\setcounter{enumi}{-1}
\item for any triangulation~$T$, the normalization~$\normalize_\chi(T)$ is a triangulation containing~$\chi$;
\item if~$\chi \in T$, then~$\normalize_\chi(T) = T$;
\item if~$T, T'$ are two adjacent triangulations, then~$\normalize_\chi(T)$ and~$\normalize_\chi(T')$ coincide or are adjacent;
\item if~$T, T'$ are two adjacent triangulations with~$\chi \in T' \ssm T$, then~${\normalize_\chi(T) = \normalize_\chi(T') = T'}$.
\end{enumerate}
\end{proposition}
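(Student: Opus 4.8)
The plan is to construct the normalization $\normalize_\chi$ explicitly, treating type $A$ as the base case and then describing the modifications needed for types $B/C$ and $D$. In type $A$, the diagonal $\chi$ cuts the $(n+3)$-gon into two sub-polygons $P_1$ and $P_2$. Given a triangulation $T$ not containing $\chi$, I would define $\normalize_\chi(T)$ by ``pushing'' all diagonals of $T$ onto the two sides: concretely, restrict $T$ to each side of $\chi$ and retriangulate the pseudo-region crossed by $\chi$. The cleanest way to phrase this is the Sleator--Tarjan--Thurston construction: $\normalize_\chi(T)$ is the unique triangulation containing $\chi$ together with all diagonals of $T$ that do not cross $\chi$, completed by a canonical ``fan'' triangulation of the remaining pieces. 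I would verify (P0)--(P3) for this map directly. Property (P0) is immediate since $\chi$ is added by construction; (P1) holds because if $\chi \in T$ then no diagonal crosses $\chi$ and the completion is $T$ itself.

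The main content is (P2): if $T$ and $T'$ differ by a single flip, then $\normalize_\chi(T)$ and $\normalize_\chi(T')$ are equal or adjacent. I would argue by cases on the flipped diagonal $\delta$. If neither $\delta$ nor its flip crosses $\chi$, the two normalizations differ by exactly the flip of $\delta$ (so they are adjacent), unless $\delta$ lies in the canonically retriangulated region, in which case they coincide. If $\delta$ crosses $\chi$ but its flip does not (or vice versa), then both diagonals are absorbed into the retriangulated region and the normalizations coincide. The key geometric observation is that flipping a diagonal can change the set of $\chi$-crossing diagonals by at most a controlled amount, so the canonical completion changes by at most one flip. Property (P3) follows from (P0) and (P1): if $\chi \in T' \ssm T$, then $\normalize_\chi(T') = T'$ by (P1), and since $T,T'$ differ by the flip introducing $\chi$, the diagonal $\delta$ removed from $T$ to obtain $T'$ must be the unique one crossing $\chi$, so $\normalize_\chi(T) = T'$ as well.

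For types $B/C$ and $D$ the diagonal $\chi$ is a centrally symmetric object (a pair of diagonals/chords, or a long diagonal), so the normalization must respect the central symmetry: I would perform the type $A$ construction simultaneously on $\chi$ and its centrally symmetric partner, taking care that the canonical completion is itself centrally symmetric. The subtle point is type $D$, where $\chi$ may be a centrally symmetric pair of central chords and the region to be completed contains the central disk; here I would use the pseudotriangulation flip structure from item~\eqref{item:pseudoflip} of Section~\ref{sec:Dmodel} and complete the central pseudo-region with a canonical centrally symmetric pseudotriangulation. One must check that the resulting object is a valid centrally symmetric pseudotriangulation and that a single centrally symmetric flip in $T$ induces at most a single centrally symmetric flip in the normalization, matching (P2).

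The hard part will be verifying (P2) in type $D$, because the pseudotriangle/pseudoquadrangle geometry near the central disk is more delicate than the straight-diagonal case: a flip involving a central chord can alter the combinatorics of the pseudotriangles touching the disk, and I must confirm that the canonical completion of the crossed region still changes by at most one flip and remains centrally symmetric. I expect the crossing observation from the proof of Lemma~\ref{lem:distanceStars}, namely that a left central pair crosses a right central pair unless they emanate from the same vertex, to be the main tool for controlling how the crossing pattern with $\chi$ changes under a single flip. Once (P0)--(P3) are established uniformly, Proposition~\ref{prop:normalization} is complete, and it feeds directly into the deduction of Proposition~\ref{prop:enteringFaceProperty} and hence Theorem~\ref{thm:nonLeavingFaceProperty}.
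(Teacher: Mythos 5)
Your overall strategy is the same as the paper's (the Sleator--Tarjan--Thurston fan normalization in type~$A$, extended centrally symmetrically to types~$B/C$ and~$D$), but the execution has two genuine problems. First, your verification of (P2) in type~$A$ is wrong in the mixed case: when $\delta$ crosses $\chi$ but its flip $\delta'$ does not, you claim that ``both diagonals are absorbed into the retriangulated region and the normalizations coincide.'' In fact $\delta'$ does not cross $\chi$, so it is \emph{not} absorbed: it survives in $\normalize_\chi(T')$, while $\normalize_\chi(T)$ contains it only when the fan apex $p$ is an endpoint of $\delta'$. In general the two normalizations are merely adjacent. Concretely, in the hexagon with $\chi = \diag{1}{4}$ and apex $p=1$, take $T = \{\diag{2}{6}, \diag{3}{6}, \diag{3}{5}\}$ and $T' = \{\diag{2}{6}, \diag{3}{6}, \diag{4}{6}\}$ (a flip of $\delta = \diag{3}{5}$ to $\delta' = \diag{4}{6}$): then $\normalize_\chi(T) = \{\diag{1}{3}, \diag{1}{4}, \diag{1}{5}\}$ while $\normalize_\chi(T') = \{\diag{1}{3}, \diag{1}{4}, \diag{4}{6}\}$, which differ by one flip. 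The correct dichotomy is that the normalizations coincide if $p$ is an endpoint of $\delta'$, and otherwise differ by exchanging $\delta'$ with $\diag{p}{x}$, where $x$ is the endpoint of $\delta$ separated from $\chi$ by $\delta'$; this still yields (P2), but not by your argument. Relatedly, your ``canonical fan'' must be pinned down as the fan from a \emph{fixed endpoint of $\chi$}: for an arbitrary canonical apex, (P2) genuinely fails, because the region $R_\chi(T)$ and hence its vertex set changes under flips.

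Second, and more importantly, for types~$B/C$ and~$D$ your text is a plan rather than a proof, and the naive plan of ``performing the type~$A$ construction simultaneously on the two members of $\chi$'' breaks exactly where the paper has to do real work. When a diagonal of~$T$ crosses \emph{both} members of the pair $\chi = \{\diag{p}{q}, \diag{\bar p}{\bar q}\}$, pulling it toward $p$ and $\bar p$ independently does not produce a triangulation: one must insert the long diagonal $\diag{p}{\bar p}$ in type~$B/C$, respectively the central chords $\diagD{p}{R}$ and $\diagD{\bar p}{L}$ in type~$D$. When $\chi$ is a long diagonal $\diag{p}{\bar p}$ in type~$B/C$, a fan from a single endpoint is not centrally symmetric; the completion must be the double fan joining vertices on one side to $p$ and on the other side to $\bar p$. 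When $\chi$ is a centrally symmetric pair of central chords in type~$D$, the completion is not a fan at all but the left star $\starL$ or right star $\starR$ restricted to the emptied region. Identifying these completions is the main content of the proposition beyond type~$A$, and your proposal defers it. Finally, the tool you expect to carry (P2) in type~$D$ --- the left/right central chord crossing observation from Lemma~\ref{lem:distanceStars} --- is not what is needed there; the verification is the same quadrangle case analysis as in type~$A$, with the quadrangle replaced by the pseudoquadrangle obtained by glueing the two pseudotriangles incident to the flipped chord.
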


We prove this proposition type-by-type using the geometric models in types~$A$, $B/C$, and~$D$. We refer to the recent preprint of N.~Williams~\cite{Williams} for a type-free proof.

\para{Type~$A$}
We use the same definition as~\cite[Lemma~3]{SleatorTarjanThurston}. The normalization~$\normalize_\chi$ with respect to a diagonal~$\chi \eqdef \diag{p}{q}$ transforms a triangulation~$T$ to the triangulation~$\normalize_\chi(T)$ obtained by deleting all diagonals of~$T$ crossing~$\chi$ and filling in the resulting empty region~$R_\chi(T)$ by the triangulation where all diagonals are incident to~$p$. \fref{fig:typeANormalization} illustrates this normalization.

\begin{figure}[h]
	\centerline{\includegraphics[scale=.61]{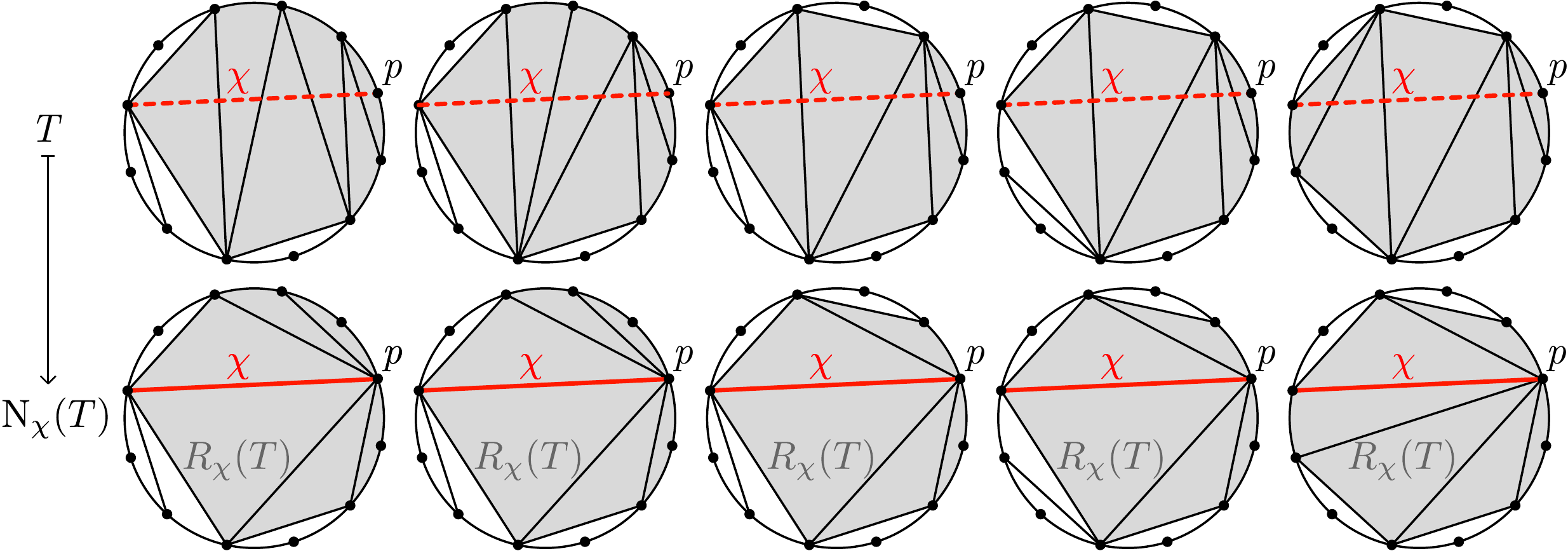}}
	\caption{Normalization map for type~$A$.}
	\label{fig:typeANormalization}
\end{figure}

Properties~(P0) and~(P1) are clear by construction. For Properties~(P2) and~(P3), consider two adjacent triangulations~$T$ and~$T'$, let~$\delta \in T$ and~$\delta' \in T'$ be such that~$T \ssm \{\delta\} = T' \ssm \{\delta'\}$, and let~$Q$ denote the quadrangle with diagonals~$\delta$ and~$\delta'$. If $\chi$ crosses both~$\delta$ and~$\delta'$, then the quadrangle~$Q$ lies in the retriangulated region~$R_\chi(T) = R_\chi(T')$, so that~$\normalize_\chi(T) = \normalize_\chi(T')$. If~$\chi$ crosses neither~$\delta$ nor~$\delta'$, then the quadrangle~$Q$ is disjoint from the retriangulated region~${R_\chi(T) = R_\chi(T')}$ and~${\normalize_\chi(T) \ssm \{\delta\} = \normalize_\chi(T') \ssm \{\delta'\}}$. If~$\chi$ crosses~$\delta$ but not~$\delta'$, and~$\chi \ne \delta'$, then~$\normalize_\chi(T) = \normalize_\chi(T')$ if~$p$ is an endpoint of~$\delta'$, and~$\normalize_\chi(T) \ssm \{\diag{p}{x}\} = \normalize_\chi(T') \ssm \{\delta'\}$ otherwise, where~$x$ denotes the endpoint of~$\delta$ separated from~$\chi$ by~$\delta'$. Finally, if~$\chi = \delta'$, then~$R_\chi(T) = Q$ while~$R_\chi(T') = \varnothing$, so that~$\normalize_\chi(T) = \normalize_\chi(T') = T'$.

To extend~$\normalize_\chi$ to types~$B/C$ and~$D$, it is convenient to consider the following equivalent description of this normalization: imagine that all diagonals of~$T$ are rubber bands attached to their endpoints and pull the rubber bands crossed by $\chi$ along~$\chi$ towards~$p$ to obtain~$\normalize_\chi(T)$. In other words, each diagonal~$\diag{x}{y}$ of~$T$ crossing~$\chi$ is replaced by~$\diag{x}{p}$ and~$\diag{p}{y}$.

\para{Type~$B/C$}
For type~$B/C$, we distinguish whether or not~$\chi$ is a long diagonal to define the normalization~$\normalize_\chi$:
\begin{itemize}
\item The normalization~$\normalize_\chi$ with respect to a centrally symmetric pair~$\chi \eqdef \{\diag{p}{q}, \diag{\bar p}{\bar q}\}$ of distinct diagonals transforms a centrally symmetric triangulation~$T$ into the centrally symmetric triangulation~$\normalize_\chi(T)$ obtained by deleting all diagonals of~$T$ crossing~$\chi$ and filling the resulting empty region~$R_\chi(T)$ by a centrally symmetric triangulation where all diagonals are incident to the points~$p$ and~$\bar p$ (note that the region~$R_\chi(T)$ need not be connected). To define the triangulation replacing~$T$ inside~$R_\chi(T)$, imagine that all diagonals of~$T$ are rubber bands attached to their endpoints and pull the rubber bands crossed by $\chi$ along~$\chi$ towards~$p$ and~$\bar p$ to obtain~$\normalize_\chi(T)$. In other words, each diagonal~$\diag{x}{y}$ of~$T$ crossing~$\chi$ is replaced by:
\begin{itemize}
\item $\diag{x}{p}$ and~$\diag{p}{y}$ if it only crosses~$\diag{p}{q}$ (and similarly with~$\diag{\bar p}{\bar q}$);
\item $\diag{x}{p}$, $\diag{p}{\bar p}$, and~$\diag{p}{y}$ if it crosses both~$\diag{p}{q}$ and~$\diag{\bar p}{\bar q}$, and~$\diag{p}{q}$ separates~$x$ from the origin while~$\diag{\bar p}{\bar q}$ separates~$y$ from the origin.
\end{itemize}
\item The normalization with respect to a long diagonal~$\chi \eqdef \diag{p}{\bar p}$ of the $2n$-gon transforms a centrally symmetric triangulation~$T$ to the centrally symmetric triangulation~$\normalize_\chi(T)$ obtained by deleting all diagonals of~$T$ crossing~$\chi$ and filling in the resulting empty region~$R_\chi(T)$ by the triangulation where all vertices clockwise between $p$ and~$\bar p$ are connected to~$\bar p$, while all vertices clockwise between $\bar p$ and~$p$ are connected to~$p$. 
\end{itemize}
\fref{fig:typeBNormalization} illustrates this normalization, where the three leftmost pictures correspond to the first situation, while the two rightmost correspond to the second situation. As in type~$A$, a straightforward case analysis shows that~$\normalize_\chi$ indeed defines a normalization in both situations.

\begin{figure}[h]
	\centerline{\includegraphics[scale=.61]{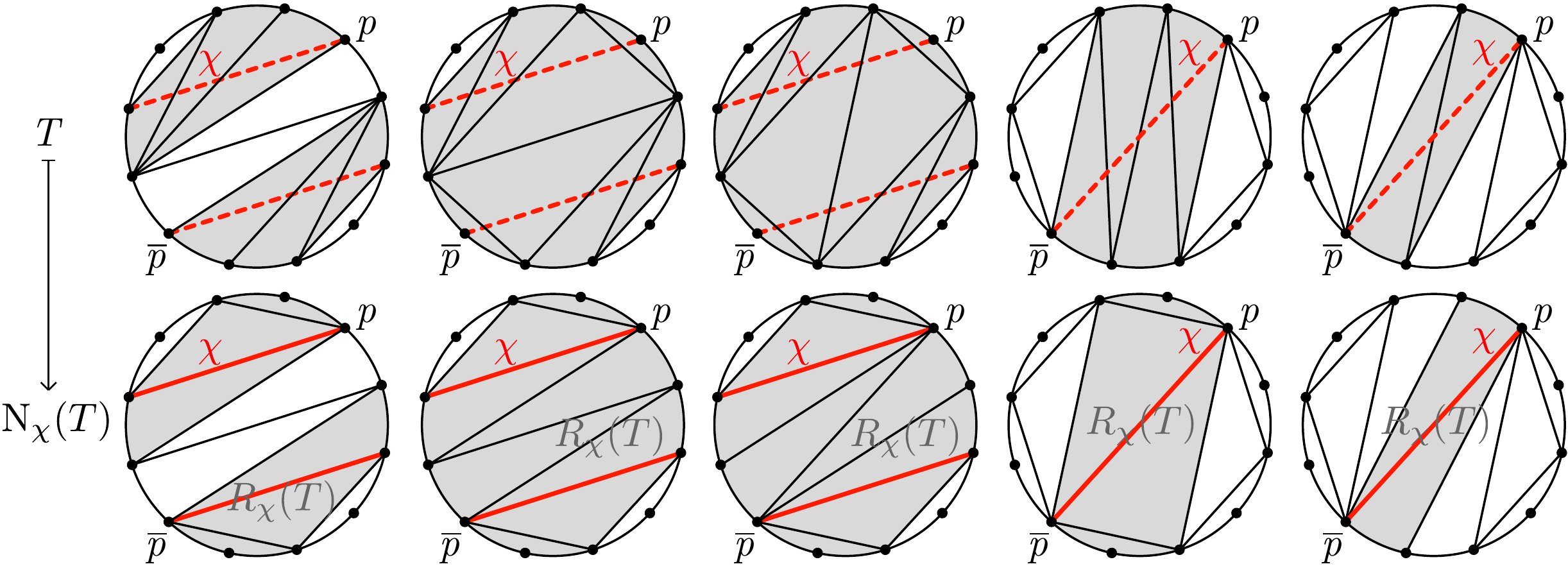}}
	\caption{Normalization map for type~$B/C$.}
	\label{fig:typeBNormalization}
\end{figure}

\para{Type~$D$}
For type~$D$, we distinguish whether or not~$\chi$ is a pair of central chords to define the normalization~$\normalize_\chi$:
\begin{itemize}
\item The normalization~$\normalize_\chi$ with respect to a centrally symmetric pair~$\chi \eqdef \{\diag{p}{q}, \diag{\bar p}{\bar q}\}$ of diagonals of the $2n$-gon transforms a centrally symmetric pseudotriangulation~$T$ into the centrally symmetric pseudotriangulation~$\normalize_\chi(T)$ obtained by deleting all chords of~$T$ crossing~$\chi$ and filling the resulting empty region~$R_\chi(T)$ by a centrally symmetric pseudotriangulation where all diagonals are incident to the point~$p$ and~$\bar p$ (note that the region~$R_\chi(T)$ needs not be connected). To define the pseudotriangulation replacing~$T$ inside~$R_\chi(T)$, imagine that all chords of~$T$ are rubber bands attached to their endpoints and pull the rubber bands crossed by $\chi$ along~$\chi$ towards~$p$ and~$\bar p$ to obtain~$\normalize_\chi(T)$. In other words, each diagonal~$\diag{x}{y}$ of~$T$ crossing~$\chi$ is replaced by:
\begin{itemize}
\item $\diag{x}{p}$ and~$\diag{p}{y}$ if it only crosses~$\diag{p}{q}$ and~$\bar p \notin \{x,y\}$ (and similarly exchanging~$p$~and~$\bar p$);
\item $\diag{x}{p}$, $\diagD{p}{R}$ and~$\diagD{\bar p}{L}$ if it only crosses~$\diag{p}{q}$, if~$x$ is on the right of the line from~$p$ to~$\bar p$, and if~$y = \bar p$ (and similarly exchanging~$p$~and~$\bar p$, or left and right);
\item $\diag{x}{p}$, $\diagD{p}{R}$, $\diagD{\bar p}{L}$, and~$\diag{\bar p}{y}$ if it crosses both~$\diag{p}{q}$ and~$\diag{\bar p}{\bar q}$, if~$\diag{p}{q}$ separates~$x$ from the origin while~$\diag{\bar p}{\bar q}$ separates~$y$ from the origin, and if the origin is on the left of the line from~$x$ to~$y$ (and similarly exchanging left and right).
\end{itemize}
Similarly, each left central chord~$\diagD{x}{L}$ (resp.~$\diagD{x}{D}$) of~$T$ crossing~$\chi$ is replaced by~$\diag{x}{p}$ and~$\diagD{p}{L}$ (resp.~$\diagD{p}{D}$) if it crosses~$\diag{p}{q}$ (and similarly for~$\diag{\bar p}{\bar q}$).
\item The normalization~$\normalize_\chi$ with respect to a centrally symmetric pair~$\chi$ of central chords transforms a centrally symmetric pseudotriangulation~$T$ into the centrally symmetric pseudotriangulation~$\normalize_\chi(T)$ obtained by deleting all chords of~$T$ crossing~$\chi$ and filling the resulting empty region~$R_\chi(T)$ by the left star~$\starL$ (resp.~right star~$\starR$) on this region if~$\chi$ is a pair of left (resp.~right) central chords.
\end{itemize}
\fref{fig:typeDNormalization} illustrates this normalization, where the three leftmost pictures correspond to the first situation, while the two rightmost correspond to the second situation. The same case analysis as in type~$A$, replacing the quadrangle~$Q$ by the pseudoquadrangle formed by glueing the two pseudotriangles of~$T$ incident to~$\delta$ (or equivalently the two pseudotriangles of~$T'$ incident to~$\delta'$) shows that~$\normalize_\chi$ indeed defines a normalization in both situations.

\begin{figure}[h]
	\centerline{\includegraphics[scale=.61]{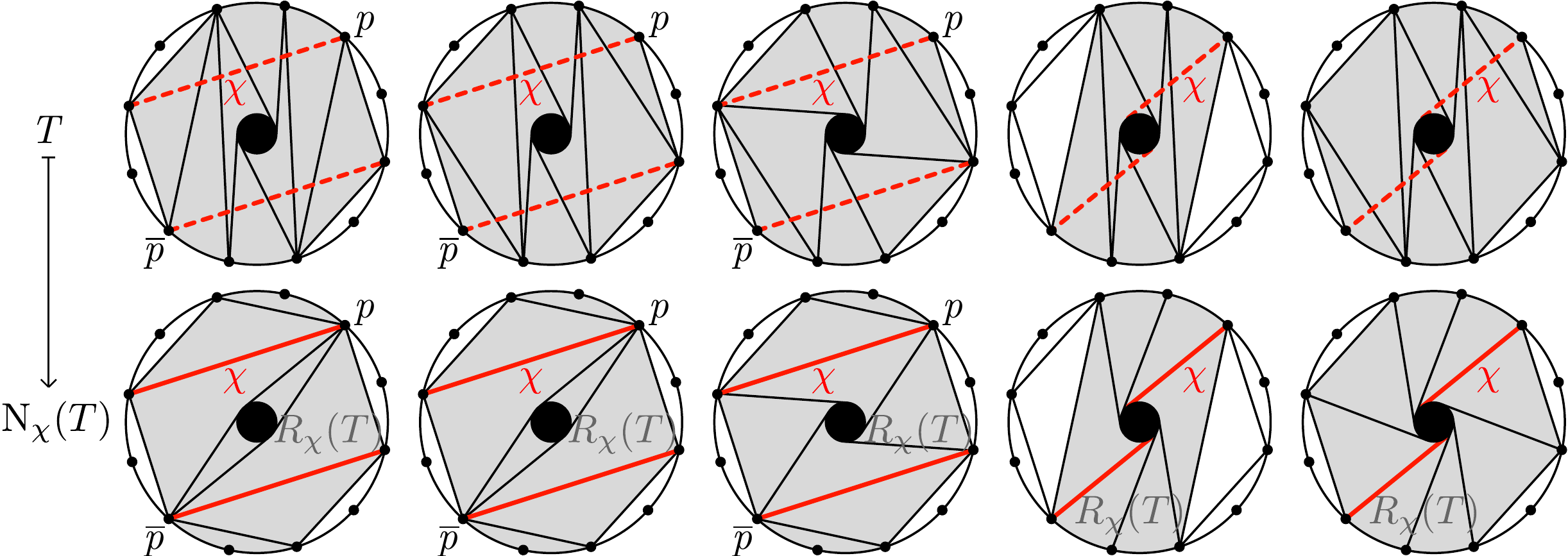}}
	\caption{Normalization map for type~$D$.}
	\label{fig:typeDNormalization}
\end{figure}


\subsection{Proof of Theorem~\ref{thm:nonLeavingFaceProperty} and Proposition~\ref{prop:enteringFaceProperty}}
\label{subsec:proofs}

Using the normalization introduced in the previous section, we are ready to prove the non-leaving-face property.

\begin{proof}[Proof of Proposition~\ref{prop:enteringFaceProperty}]
Consider two triangulations~$T$ and~$\widetilde T$ and a diagonal~$\chi$ in~$T \ssm \widetilde T$, such that the flip of~$\chi$ in~$T$ produces a diagonal~$\chi'$ that belongs to~$\widetilde T$. Let~$T = T_0, T_1, \dots, T_k = \widetilde T$ be an arbitrary geodesic between~$T$ and~$\widetilde T$. Consider the sequence~$T = T_0, \normalize_{\chi'}(T_0), \normalize_{\chi'}(T_1), \dots, \normalize_{\chi'}(T_k)$. By Property~(P1) above, the last triangulation in this sequence is~$\widetilde T$. By Property~(P3), the first two triangulations are connected by a flip. Since there exists at least one~$i$ such that~${\chi' \in T_{i+1} \ssm T_i}$, Property~(P3) also ensures that~$\normalize_{\chi'}(T_i) = \normalize_{\chi'}(T_{i+1})$. Finally, Property~(P2) asserts that any two consecutive triangulations in the remaining sequence either coincide or are adjacent. Erasing duplicated consecutive triangulations in this sequence gives a normalized path from~$T$ to~$\widetilde T$, which starts by the flip of~$\chi$, and is a geodesic since it is not longer than the geodesic~${T = T_0, T_1, \dots, T_k = \widetilde T}$.
\end{proof}


\begin{proof}[Proof of Theorem~\ref{thm:nonLeavingFaceProperty}]
Let~$T$ and~$\widetilde T$ be two triangulations. Consider a path~$\pi$ from~$T$ to~$\widetilde T$ which flips a common chord of~$T$ and~$\widetilde T$. Let~$T_1$ be the first triangulation along~$\pi$ which does not contain~$T \cap \widetilde T$, and let~$T_0$ the previous triangulation along~$\pi$. By Proposition~\ref{prop:enteringFaceProperty}, there exists a geodesic~$\pi'$ from~$T_1$ to~$\widetilde T$ starting with~$T_0$. Combining the subpath of~$\pi$ from~$T$ to~$T_0$ with the subpath of~$\pi'$ from~$T_0$ to~$\widetilde T$ produces a path from~$T$ to~$\widetilde T$ shorter than~$\pi$. It follows that no common chord of~$T$ and~$\widetilde T$ can be flipped along a geodesic between~$T$ and~$\widetilde T$.
\end{proof}


\section{Further examples related to the associahedron}
\label{sec:ctrexms}

To conclude, we survey five relevant generalizations of the flip graph on triangulations of a convex polygon. The diameter of these flip graphs is not precisely determined but we present some known asymptotic bounds. Interestingly, these five families contain specific examples that do not always satisfy the non-leaving-face property.


\subsection{Pseudotriangulation polytope}
\label{subsec:ctrexmPseudotriangulations}

A (pointed) \defn{pseudotriangulation} of a point set~$P$ in general position in the plane is a maximal set of edges of~$P$ which is crossing-free and pointed (any vertex is adjacent to an angle wider than~$\pi$). Flips between pseudotriangulations are defined as in item~(\ref{item:pseudoflip}) in Section~\ref{sec:Dmodel}.
We refer to~\cite{RoteSantosStreinu-survey} for a survey on pseudotriangulations and their properties. Using rigidity properties of pseudotriangulations and the expansive motion polyhedron, G.~Rote, F.~Santos and I.~Streinu showed in~\cite{RoteSantosStreinu-pseudotriangulationPolytope} that the flip graph on pseudotriangulations of~$P$ can be realized as the graph of the \defn{pseudotriangulation polytope}. This polytope is a realization of the type~$A$ associahedron when~$P$ is in convex position.

The diameter of the pseudotriangulation polytope of~$P$ is known to be bounded between~$|P|$ and~$|P| \cdot \log(|P|)$, see~\cite{Bereg} and~\cite{RoteSantosStreinu-survey}. These are the best current bounds to our knowledge.

O.~Aichholzer~\cite{Aichholzer} observed that not all pseudotriangulation polytopes satisfy the non-leaving-face property. His example is represented in \fref{fig:ctrexmPseudotriangulations}. The point set~$P$ is formed by a small downward triangle~$\triangledown$ together with a big upward triangle whose vertices are replaced by convex chains with~$m = 6$ points. \fref{fig:ctrexmPseudotriangulations} shows a path of~$3m+4 = 22$ flips from the top left pseudotriangulation~$T$ to the bottom left pseudotriangulation~$\widetilde T$, flipping common edges of~$T$ and~$\widetilde T$. In contrast, any path from~$T$ to~$\widetilde T$ which preserves their common edges has length at least~$4m-1 = 23$. Indeed, if the edges of~$T \cap \widetilde T$ are preserved, no flip produces an edge of~$\widetilde T$ before all but one edge incident to one vertex of the small downward triangle~$\triangledown$ have been flipped. We thus need at least~$m-1$ flips before a flip can create edges of~$\widetilde T \ssm T$, and then at least $3m$ flips to create them all.

\begin{figure}[h]
	\centerline{\includegraphics[width=1\textwidth]{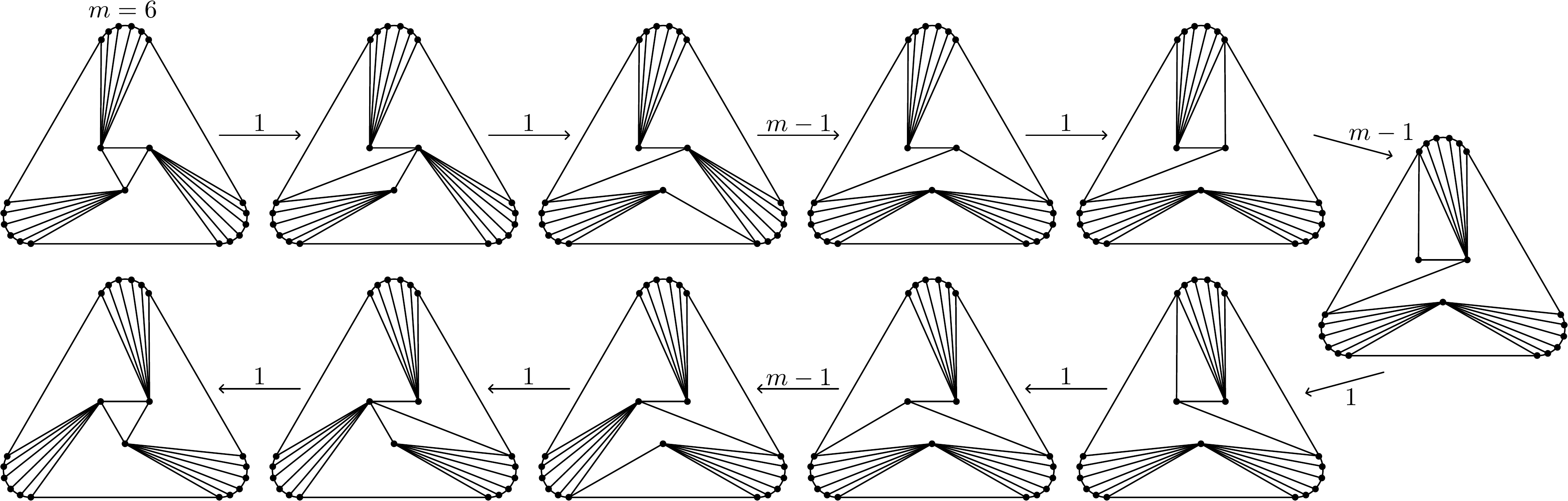}}
	\caption{A geodesic (of length~$3m+4 = 22$) between two pseudotriangulations that flips common edges between them.}
	\label{fig:ctrexmPseudotriangulations}
\end{figure}


\subsection{Multiassociahedron}
\label{subsec:ctrexmMultitriangulations}

A \defn{$k$-triangulation} of a convex $m$-gon is a maximal set of diagonals not containing a $(k+1)$-crossing, \ie a set of $k+1$ pairwise crossing diagonals. In a $k$-triangulation, every $k$-relevant diagonal (\ie with at least~$k$ vertices of the $m$-gon on each side) can be flipped to obtain a new $k$-triangulation. We refer to~\cite{PilaudSantos-multitriangulations} for a local description of this operation. The question whether the flip graph can be realized as the graph of a convex polytope remains open, except for very particular cases including the classical $n$-dimensional associahedron when~$k=1$ and~$m = n+3$. We can however study the diameter of the flip graph and the properties of its geodesics.

The best known bounds for the diameter~$\delta(m,k)$ of the flip graph on $k$-triangulations of the $m$-gon are given by
\[
\big( k+\frac{1}{2} \big) \cdot m - (k+1)^2 \; \le \; \delta(m,k) \; \le \; 2k \cdot m - 2k(4k+1).
\]
The upper bound, due to T.~Nakamigawa~\cite{Nakamigawa}, holds for all~$m \ge 4k^2(2k+1)$. Observe that it is tight when~$k=1$ by the result of~\cite{SleatorTarjanThurston, Pournin} on the associahedron. The lower bound, proved in~\cite[Lemma~2.39]{Pilaud-these}, holds for all~$m \ge 4k+2$. We refer to~\cite[Section~2.3.2]{Pilaud-these} for a summary of known properties on the diameter of the multiassociahedron.

We now show that the multiassociahedron does not satisfy the non-leaving-face property, \ie that not all $k$-triangulations along a geodesic between two $k$-triangulations always contain their common diagonals for large enough values of~$k$ and~$m$. We can argue using the universality property of multitriangulations~\cite[Proposition~5.6]{PilaudSantos-brickPolytope}. This property ensures in particular that the flip graph on pseudotriangulations of any planar point set in general position can be embedded as a subgraph induced by all $k$-triangulations containing a certain subset of diagonals in the flip graph on $k$-triangulations of an $m$-gon for large enough~$k$ and~$m$. The path of \fref{fig:ctrexmPseudotriangulations} thus results in a path in the flip graph on $k$-triangulations of the $m$-gon which contradicts the non-leaving-face property.

In fact, computer experiments with the software Sage~\cite{sage} provided us with a much smaller example illustrated on \fref{fig:ctrexmMultitriangulations}. This figure shows a path of~$4$ flips between the leftmost $2$-triangulation~$T$ and the rightmost $2$-triangulation~$\widetilde T$ of the octagon, such that a common diagonal is flipped along the sequence. In each $2$-triangulation, the blue dashed edge is inserted by the previous flip while the red bold edge is deleted by the next flip. Alternatively, it might help the reader to visualize these flips on the pseudoline arrangements represented below (see~\cite[Section~3.3]{PilaudPocchiola}). To see that the path represented in this figure is a geodesic, observe that~$T \ssm \widetilde T$ contains $3$ edges whose flip in~$T$ all produce an edge not in~$\widetilde T$.

\begin{figure}[h]
	\centerline{\includegraphics[width=1\textwidth]{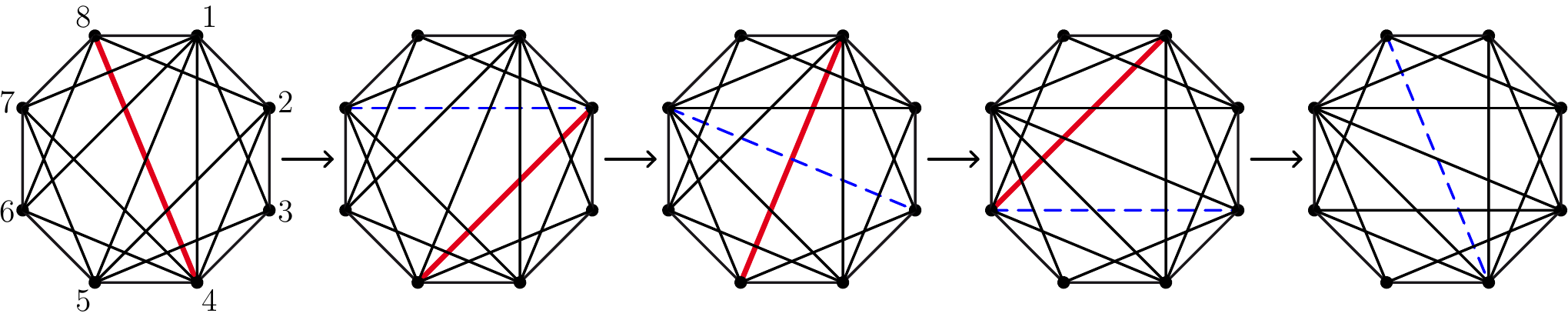}}\bigskip
	\centerline{
		\begin{overpic}[width=.19\textwidth]{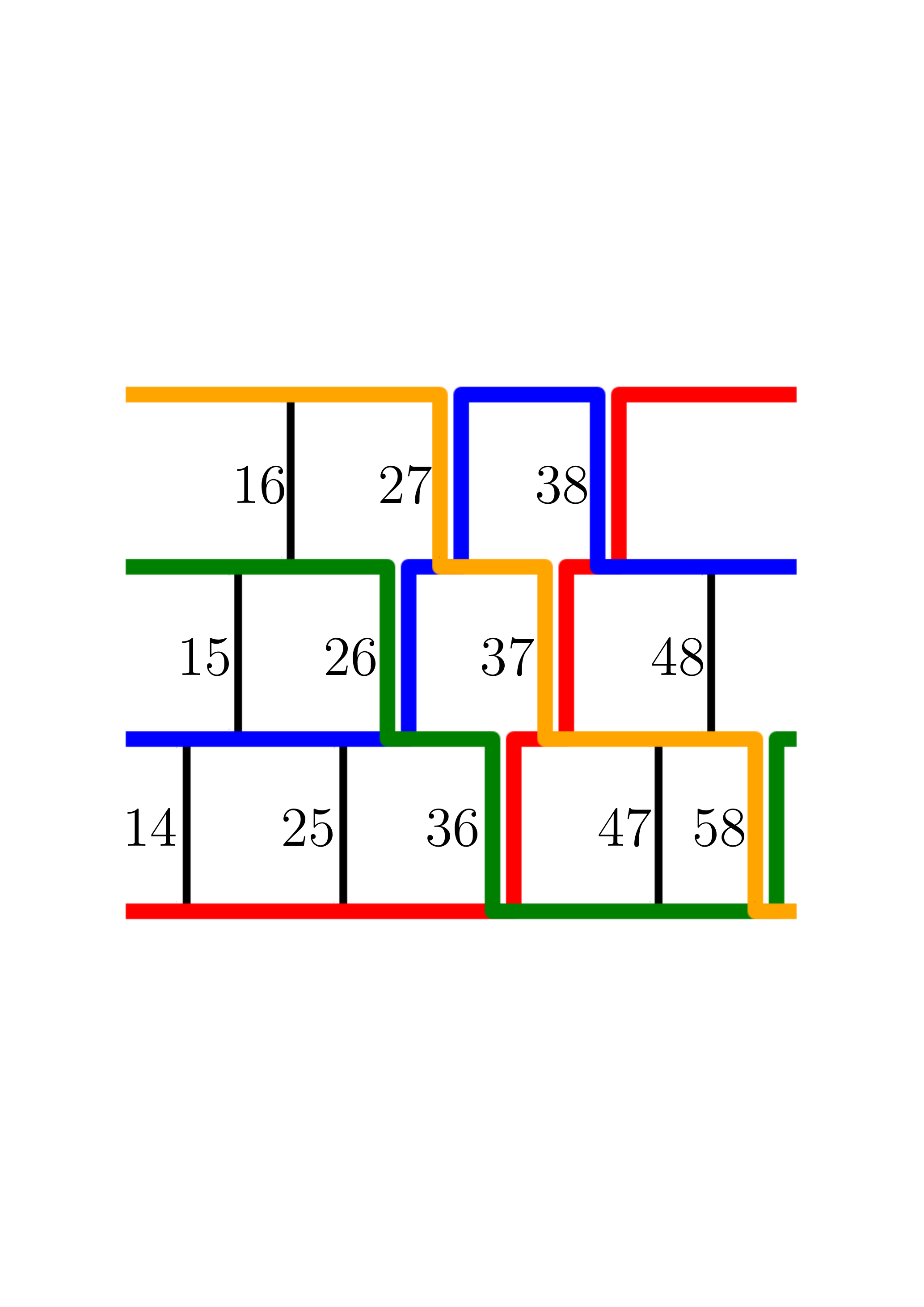}
			\put(0,12){\footnotesize $14$}
			\put(22,12){\footnotesize $25$}
			\put(42,12){\footnotesize $36$}
			\put(65,12){\footnotesize $47$}
			\put(79,12){\footnotesize $58$}
			\put(7,36){\footnotesize $15$}
			\put(27,36){\footnotesize $26$}
			\put(49,36){\footnotesize $37$}
			\put(73,36){\footnotesize $48$}
			\put(14,60){\footnotesize $16$}
			\put(34,60){\footnotesize $27$}
			\put(57,60){\footnotesize $38$}
		\end{overpic}
		\multido{\n=2+1}{4}{\;\;\includegraphics[width=.19\textwidth]{sortingNetwork\n}}
	}
	\caption{A geodesic between two $2$-triangulations of the octagon that flips a common edge between them.}
	\label{fig:ctrexmMultitriangulations}
\end{figure}


\subsection{Graph associahedron}
\label{subsec:ctrexmGraphAssociahedron}

Fix a finite connected graph~$G$. A \defn{nested set} on~$G$ is a set of \defn{tubes} (proper connected induced subgraphs) of~$G$ which are pairwise nested, or disjoint and non-adjacent in~$G$. The simplicial complex of all nested sets on~$G$ is called \defn{nested complex} and has been realized as the boundary complex of the \defn{$G$-associahedron}~\cite{CarrDevadoss}. Two maximal nested sets are connected by a \defn{flip} if one can be obtained from the other by replacing a tube with the unique other tube that produces a new nested set. Specific families of graph-associahedra provide relevant families of polytopes: path associahedra are classical type~$A$ associahedra, cycle associahedra are cyclohedra (or type~$B/C$ associahedra), star associahedra are stellohedra, and complete graph associahedra are type~$A$ permutahedra.

Graph properties of graph associahedra are studied by T.~Manneville and V.~Pilaud in~\cite{MannevillePilaud}. They show that the diameter~$\delta(G)$ of the $G$-associahedron is a non-decreasing function of the graph~$G$, meaning that~$\delta(G) \le \delta(G')$ if~$G$ is a subgraph of~$G'$. Combining this non-decreasing property with the diameter of the permutahedron on the one hand, and the analysis of the diameter~$\delta(T)$ for trees on the other hand (based on~\cite{Pournin}), they obtain that the diameter~$\delta(G)$ is bounded by
\[
\max(2n - 20, m) \; \le \; \delta(G) \; \le \; \binom{n}{2},
\]
where~$n$ and~$m$ denote the number of vertices and edges of~$G$.

Concerning the non-leaving-face property, they observed the following facts:
\begin{enumerate}[(i)]
\item Not all graph associahedra satisfy the non-leaving-face property. \fref{fig:ctrexmGraphAssociahedron} reproduces their example. It shows a path of length~$2n$ between two maximal nested sets~$N, \widetilde N$ on the star with~$n$ branches. In contrast, the minimal face containing~$N$ and~$\widetilde N$ is an $n$-dimensional permutahedron and the graph distance between~$N$ and~$\widetilde N$ in this face is~$\binom{n}{2}$. Therefore, the corresponding graph associahedron (stellohedron in this case) does not satisfy the non-leaving-face property when~$n \ge 5$.

\begin{figure}[h]
	\centerline{\includegraphics[width=1\textwidth]{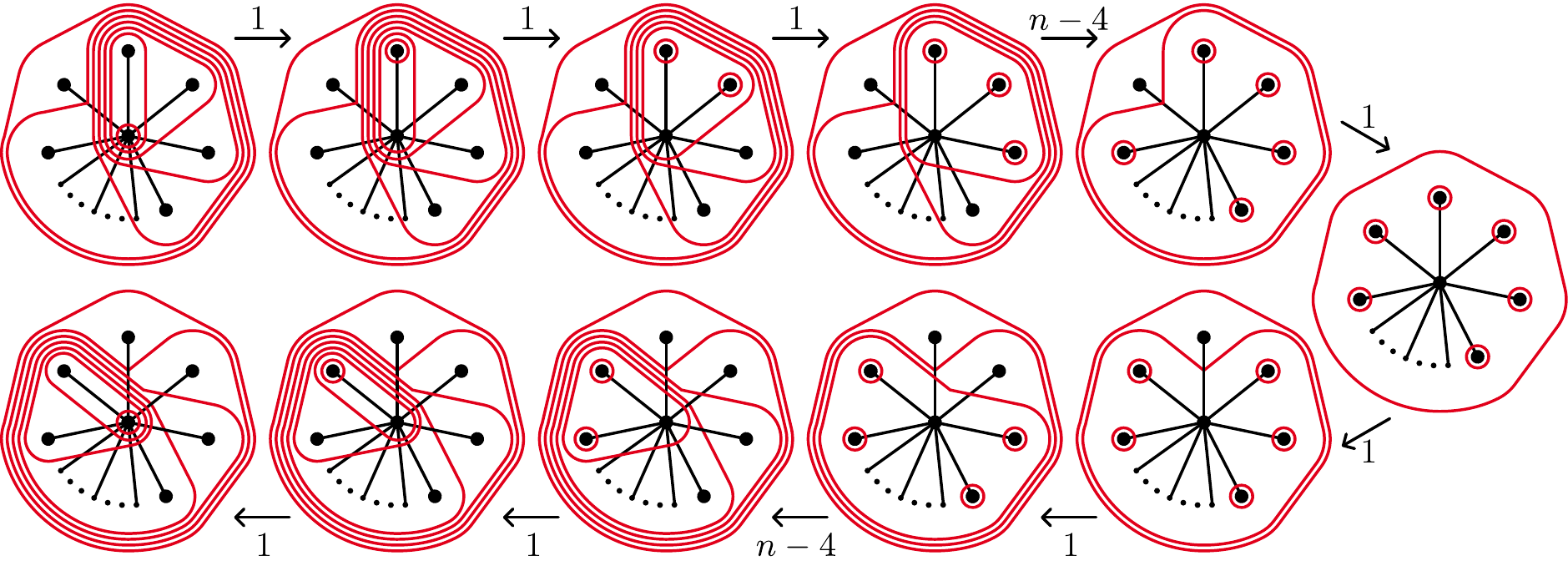}}
	\caption{A geodesic (of length~$2n$) between two maximal nested sets of the star that flips a common nested set.}
	\label{fig:ctrexmGraphAssociahedron}
\end{figure}

\item Any geodesic between two maximal nested sets~$N, \widetilde N$ on~$G$ remains in the face of the \mbox{$G$-asso}\-ciahedron corresponding to all common tubes of~$N$ and~$\widetilde N$ which are not contained in tubes of~$N \ssm \widetilde N$.
\end{enumerate}


\subsection{Secondary polytopes}
\label{subsec:ctrexmSecondaryPolytope}

The \defn{secondary polytope} of a point set~$P \subset \R^d$ is a polytope whose face lattice is isomorphic to the refinement poset of \defn{regular subdivisions} of~$P$, \ie polyhedral subdivisions of~$P$ which can be obtained as the vertical projection of the lower convex hull of the points of~$P$ lifted by an arbitrary height function. In particular, the graph of the secondary polytope of~$P$ has one vertex for each regular triangulation of~$P$ and one edge for each regular flip. See~\cite{LoeraRambauSantos} for a detailed presentation of this polytope. For example, the classical associahedron is the secondary polytope of a convex point set in the plane. The secondary polytope of a \mbox{$d$-dimen}\-sional configuration of~$n$ points has dimension~$n-d-1$, and it is known that its diameter cannot exceed
\[
\min\left( (d+2)\binom{n}{\left\lfloor \frac{d}{2} + 1 \right\rfloor}, \binom{n}{d+2} \right),
\]
see~\cite[Corollary~5.3.11]{LoeraRambauSantos}.

As observed by F.~Santos~\cite{Santos-Oberwolfach}, the secondary polytope of two dilated copies of the standard $(n-1)$-dimensional simplex is combinatorially equivalent to the graph associahedron of the star with $n$ leaves (stellohedron) discussed in Section~\ref{subsec:ctrexmGraphAssociahedron}. For example, \fref{fig:stellohedra} illustrates the correspondence between the secondary polytope of the ``mother of all examples''~\cite[Example~2.2.5]{LoeraRambauSantos} and the graph associahedron of the tripod. As illustrated in \fref{fig:ctrexmGraphAssociahedron}, the stellohedron of dimension~$n \ge 5$ does not satisfies the non-leaving-face property. Therefore, not all secondary polytopes satisfy the non-leaving-face property. 

\begin{figure}
	\centerline{\includegraphics[width=\textwidth]{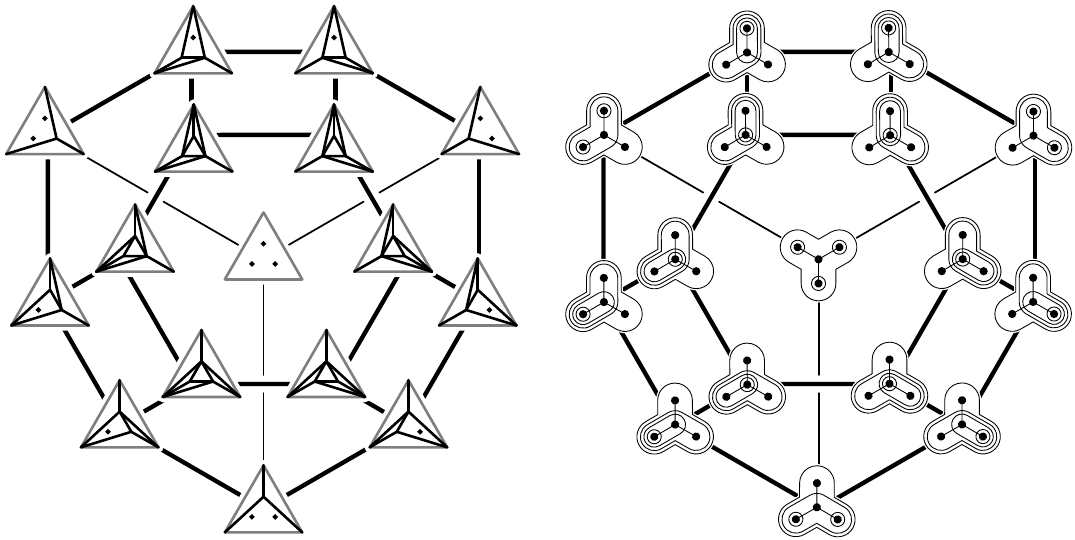}}
	\caption{The secondary polytope of two dilated copies of the standard simplex (left) is combinatorially equivalent to the stellohedron (right).}
	\label{fig:stellohedra}
\end{figure}


\subsection{Flip graph on all triangulations}
\label{subsec:ctrexmAllTriangulations}

Our last example is the flip graph on all triangulations (regular or not) of the point set $P$. This graph is not always connected~\cite{Santos}, but it is connected for point sets in the plane, in which case the diameter is at most~$4n$~\cite[Corollary~3.4.4]{LoeraRambauSantos}. This bound assumes that flips that insert or delete a vertex are allowed. Otherwise, the diameter can be become quadratic as illustrated by the ``double chain" example in~\cite[Example~3.4.5]{LoeraRambauSantos} reproduced in \fref{fig:ctrexmTriangulations}. This example also shows that geodesics between two triangulations in the flip graph delete and reinsert common edges between them.

\begin{figure}[h]
	\centerline{\includegraphics[width=.75\textwidth]{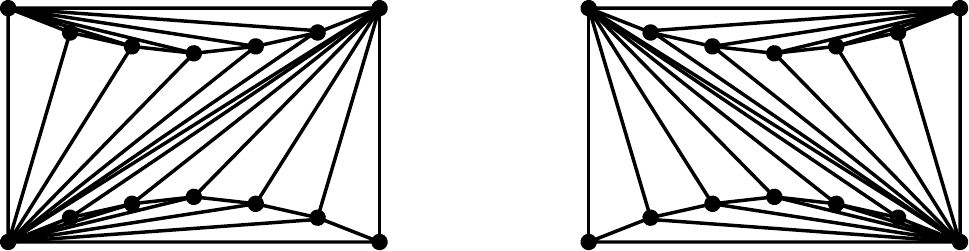}}
	\caption{Two triangulations of the double chain.}
	\label{fig:ctrexmTriangulations}
\end{figure}


\section*{Acknowledgments}

We are grateful to Oswin Aichholzer, to Thibault Manneville, and to Francisco Santos for letting us present the examples of Section~\ref{sec:ctrexms} and for interesting discussions related to them. We also thank Nantel Bergeron and Thibault Manneville for useful comments and suggestions on a preliminary draft. We thank Thomas Brustle for pointing out the work of Yannick Lebrun~\cite{Lebrun} to our attention. Finally, we are indebted to two anonymous referees for their careful reading and patient comments and suggestions which significantly improved the presentation and accuracy of the paper.


\bibliographystyle{alpha}
\bibliography{CeballosPilaud_diameterTypeDAssociahedron}
\label{sec:biblio}


\begin{landscape}
\pagestyle{empty}


\begin{figure}[p]
	\vspace*{-2.5cm}
	\centerline{\includegraphics[width=1.45\textwidth]{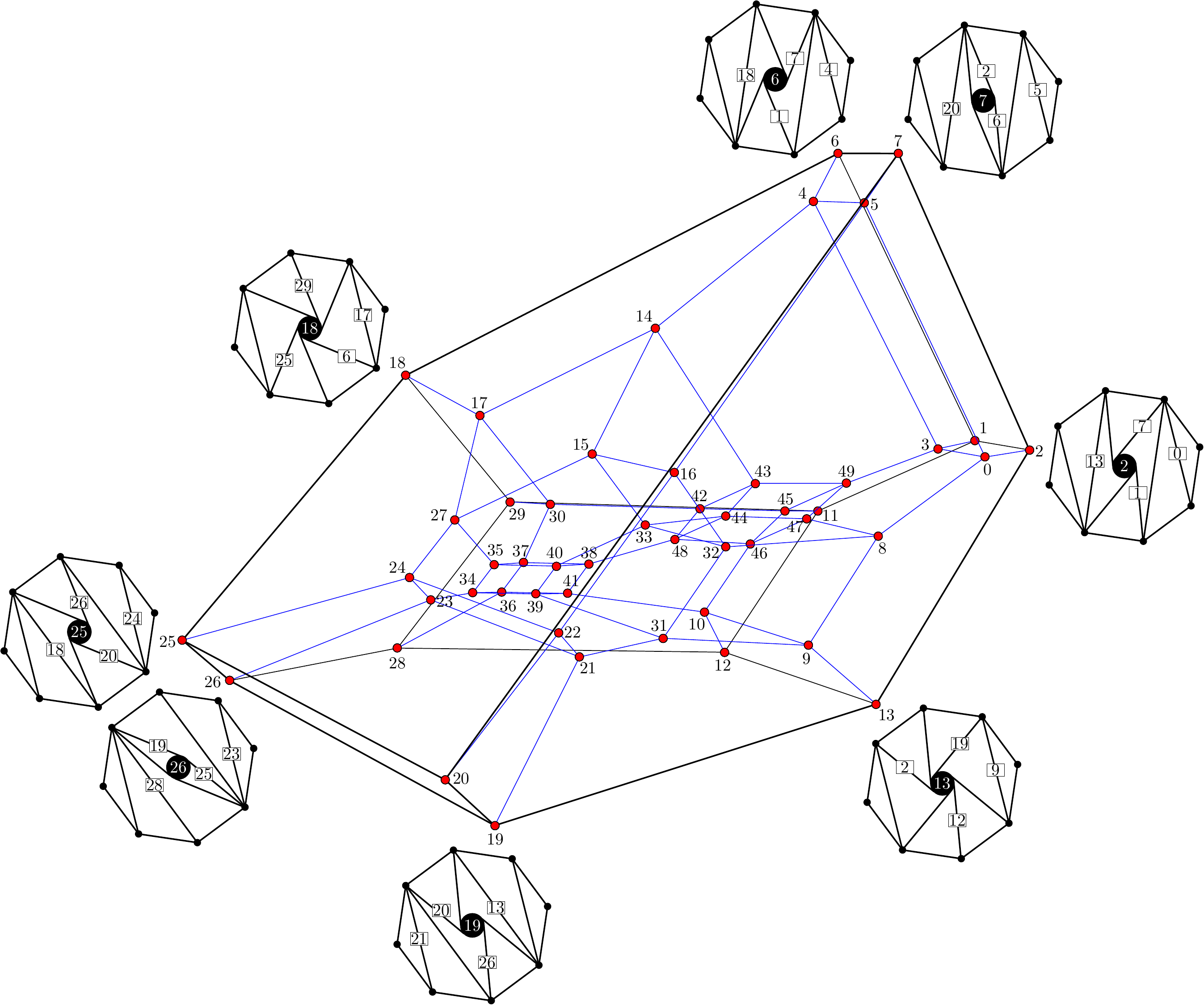}}
	\caption{The type~$D_4$ flip graph. We have represented some of the corresponding centrally symmetric pseudotriangulations of~$\configD_4$ on this picture, while the others can be found on \fref{fig:42pseudotriangulationsHorizontal}. In each pseudotriangulation, the number at the center of the disk is its label in the flip graph, and each pair of chords is labeled with the pseudotriangulation obtained when flipping it. The underlying graph used for the representation is a Schlegel diagram of the type~$D_4$ associahedron~\cite{ChapotonFominZelevinsky, HohlwegLangeThomas, PilaudStump-brickPolytope}.}
	\label{fig:typeD4associahedron}
\end{figure}

\begin{figure}[p]
	\vspace*{-2.5cm}
	\centerline{\includegraphics[width=1.45\textwidth]{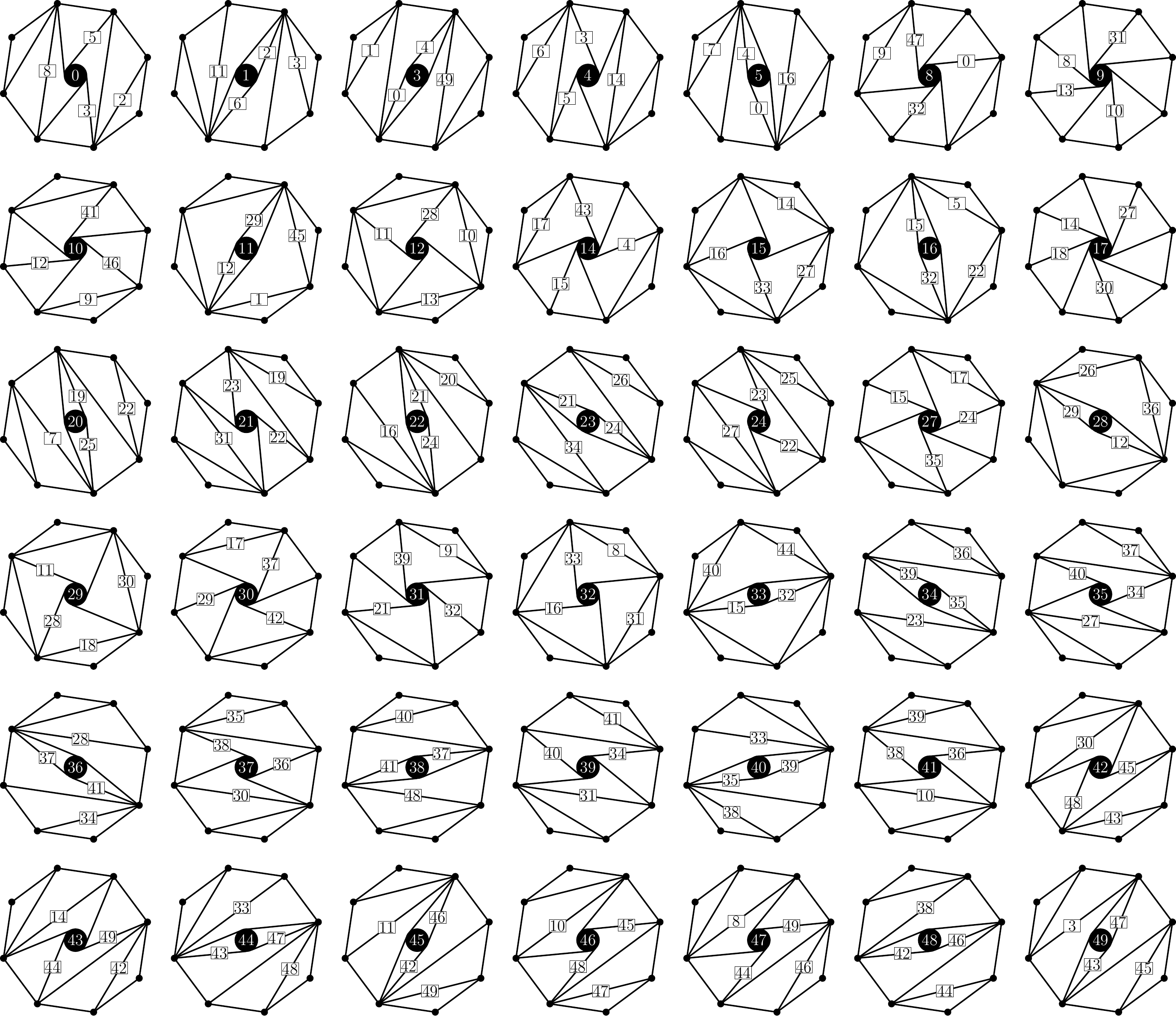}}
	\caption{The remaining~$42$ centrally symmetric pseudotriangulations of the configuration~$\configD_4$. See \fref{fig:typeD4associahedron} for the other~$8$ centrally symmetric pseudotriangulations, the flip graph and the explanation of the labeling conventions.}
	\label{fig:42pseudotriangulationsHorizontal}
\end{figure}

\end{landscape}

\end{document}